\documentclass[a4paper]{amsart}
\usepackage{graphicx, url, color, xcolor, amssymb, amsmath,enumitem,multirow,appendix, bbm} 

\usepackage{hyperref}

\theoremstyle{plain}
\newtheorem{thm}{Theorem}[section]
\newtheorem*{thm*}{Theorem}
\newtheorem{lem}[thm]{Lemma}

\newtheorem{prop}[thm]{Proposition}

\newtheorem{innercustomthm}{Theorem}
\newenvironment{customthm}[1]
  {\renewcommand\theinnercustomthm{#1}\innercustomthm}
  {\endinnercustomthm}

\theoremstyle{definition}

\newtheorem{example}[thm]{Example}

\title{Hecke modules associated to certain even ternary lattices of half-discriminant $\mathbf{N^2}$}
\author{Samadrito Bhattacharyya, Giovanna De Lauri, Dan Fretwell}
\date{}

\begin{document}

\begin{abstract}
We consider Hecke modules of orthogonal modular forms attached to definite even ternary lattices of half-discriminant $N^2$, where $N > 1$ is odd and square-free. We prove explicit Jacquet-Langlands style correspondences with spaces of classical modular forms (in the spirit of Birch \cite{birch1991hecke} and Hein-Tornaria-Voight \cite{heintornariavoight}). This is done by applying sequences of Watson-style transformations that reduce the discriminant while preserving the adelic orthogonal group.
\end{abstract}

\maketitle

\section{Introduction}

Let $\Lambda\subset V$ be a lattice in a ternary rational quadratic space $(V,Q)$. Since the celebrated Hasse-Minkowski theorem tells us that $(V,Q)$ is determined up to isometry by its completions, one naturally wonders whether the same is true for $\Lambda$ (i.e.\ if $\Pi\subset V$ is locally isometric to $\Lambda$ everywhere, then does it follow that it is globally isometric to $\Lambda$). Unfortunately, it has been known for a long time that the answer is in general no, with the obstruction being measured by the class set of $\Lambda$, the finite set $\text{Cl}(\Lambda) = O(V)\backslash \text{gen}(\Lambda)$. The most interesting case is when $(V,Q)$ is positive definite, as the class set is very well understood otherwise.

Seminal work of Kneser \cite{Kneser1957} shows that every isometry class in $\text{Cl}(\Lambda)$ can be computed by beginning with the lattice $\Lambda$ and computing iterated $p_i$-neighbours for some finite sequence of primes $p_i$ (a $p$-neighbour of $\Lambda$ is a lattice $\Pi\subset V$ with the same scale as $\Lambda$ and satisfying $\Lambda/(\Lambda\cap\Pi) \cong \Pi/(\Lambda\cap\Pi)\cong\mathbb{Z}/p\mathbb{Z}$). The set of $p$-neighbours of a lattice is a finite set and is explicitly computable. If there is only one spinor genus in $\text{gen}(\Lambda)$ then in fact one can apply Kneser's result using a single fixed prime $p\nmid 2\,\text{disc}(\Lambda)$.

Naturally, one wonders how the $p$-neighbours of a lattice $\Pi\in\text{gen}(\Lambda)$ distribute among the isometry classes in $\text{Cl}(\Lambda)$? Alternatively, consider the directed graphs $G_p$ with vertices corresponding to $\text{Cl}(\Lambda) = \{[\Lambda_1],[\Lambda_2],...,[\Lambda_h]\}$ and directed edges from $[\Lambda_i]$ to $[\Lambda_j]$ for each $p$-neighbour of $\Lambda_i$ that is isometric to $\Lambda_j$. Quite often $G_p$ is connected (by Kneser's result, when $p$-neighbours with respect to the prime $p$ suffice to exhaust the class set) and one wishes to study the entries of the associated adjacency matrices $T_p$. Chenevier \cite{chenevier2021statistics} recently considered this question, showing precise distribution results for large $p$. 

Going further, one might wonder about the spectral properties of the directed graphs $G_p$. Originally, this was considered by Birch, who observed that often the non-trivial eigenvalues of the family of matrices $T_p$ match Hecke eigenvalues of weight $2$ modular forms (see \cite{birch1991hecke}, also discussed in \cite{hein2016orthogonal} and \cite{heintornariavoight}). It is this connection that shows that for such lattices the family of directed graphs $G_p$ are Ramanujan graphs.

Alternatively, $p$-neighbour data can be studied by instead dualising and looking at the space of functions: \[\mathcal{M} = \{f:\text{Cl}(\Lambda) \rightarrow\mathbb{C}\}\cong\mathbb{C}^h.\]  By averaging such functions over $p$-neighbours of the input lattice, one gets an operator $T_p$. These operators are then the adjacency matrices considered above and so have the same eigenvalues. Such a space of functions is a special example of a space of (definite) orthogonal modular forms, and the operators $T_p$ are examples of Hecke operators acting on this space (a special example of a Hecke-module of algebraic modular forms, due to Gross \cite{gross1999algebraic}).

A natural generalisation of this idea is to consider more general spaces of (definite) orthogonal modular forms attached to $\Lambda$: \[\mathcal{M}(O(V),O(\widehat{\Lambda}), W) = \{f: \text{gen}(\Lambda) \rightarrow W\,|\, f(\gamma \Pi) = \gamma\cdot f(\Pi) \text{ for all } \gamma\in O(V)\}\] with $W$ is a finite-dimensional representation of $O(V)$, called the weight. The trivial representation $W = \mathbb{C}$ then gives rise to the space of $O(V)$-invariant functions, hence is the space $\mathcal{M}$ above. As before, there are Hecke operators $T_p$ acting on these spaces, via averaging over $p$-neighbours. The insights of Birch are immediately interpreted as providing explicit transfer results between Hecke-modules of orthogonal modular forms and classical modular forms, in the spirit of the Jacquet-Langlands correspondence (a special case of Langlands Functoriality).

The recent paper of Hein-Tornaria-Voight \cite{heintornariavoight} generalises the observations of Birch and proves a Hecke-equivariant transfer result for a wide range of lattices with non-square half-discriminant. Roughly speaking, if $\Lambda\subset V$ is an even lattice that corresponds via the even Clifford functor to a residually unramified order $\mathcal{O}\subset B$ in a definite quaternion algebra $B$, then they are able to prove a Hecke-equivariant isomorphism between spaces of orthogonal modular forms attached to $\Lambda$ and spaces of classical modular forms. The strength of these results is twofold: 

\begin{itemize}
\item{they treat spaces of orthogonal modular forms of varying weight, showing that other classical modular forms can be computed this way (e.g.\ not just weight $2$),}
\item{the results are entirely explicit, giving a precise and complete description of the image of the transfer in terms of weight, level, newness and Atkin-Lehner eigenvalues (at the bad primes).}
\end{itemize}

More recently, a paper of De Lauri-Fretwell \cite{delaurifretwell} began the exploration of genera corresponding to lattices with square half-discriminant, i.e.\ $p^2$ with $p$ prime and $4p^2$ with $p$ an odd prime. For each genus they were able to either prove a Hecke-equivariant transfer result or conjecture one (the difficulty being that the corresponding orders are residually ramified at either $2$ or $p$).

In this paper we continue to explore the square half-discriminant case, considering the case of even lattices of half-discriminant $N^2$ with $N$ odd and square-free. For every such genus we are able to prove that there exists a Hecke-equivariant isomorphism with a Hecke-module of classical modular forms. Our results are explicit, describing the image of the transfer as precisely as possible (as with the results in \cite{heintornariavoight} and \cite{delaurifretwell}). The following is our main theorem (with notation as in Sections $2$-$4$).

\begin{thm}\label{main}
Let $\Lambda$ be an even lattice in a positive definite ternary quadratic space $(V,Q)$ over $\mathbb{Q}$. Let $\mathrm{disc}(\Lambda) = 2N^2$ with $N$ odd and square-free and suppose that $N = N_1N_2N_3N_4$ according to Subsection \ref{N1N2N3N4}. For each $k\geq 0$ and $d \,|\, N$ there exists a Hecke-equivariant isomorphism (for Hecke operators away from $N$): \[\mathcal{S}(O(V),O(\widehat{\Lambda}),W_{2k}\otimes \theta_d) \cong \bigoplus_{M_4|N_4}S_{2k+2}^{N_1M_4^2-\textup{new}}(\Gamma_0(N_1N_2N_3^2M_4^2), \varepsilon_{N_1}^{(N_1N_2N_3^2M_4^2)}\varepsilon_d^{(N_1N_2N_3^2M_4^2)})\]
\end{thm}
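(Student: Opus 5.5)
We reduce to the theorem of Hein--Tornaria--Voight, using the strategy the abstract describes: Watson-style transformations that lower the discriminant without changing the adelic orthogonal group. Write $N=N_1N_2N_3N_4$ as in Subsection \ref{N1N2N3N4}; the four factors record the local behaviour of $\Lambda_p$ at each $p\mid N$. Since $\mathrm{ord}_p(\mathrm{disc})=2$ and $p$ is odd, the Jordan decomposition of $\Lambda_p$ is either $\langle u_1\rangle\perp p^2\langle u_2,u_3\rangle$-like, or $\langle u_1\rangle\perp p\langle u_2\rangle\perp p\langle u_3\rangle$ with anisotropic or isotropic $p$-modular part, or $\langle u_1,u_2\rangle\perp p^2\langle u_3\rangle$.

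\textbf{Step 1: local Watson maps.} For each prime $p\mid N$, apply a $p$-local Watson transformation to $\Lambda$. One candidate is $\Lambda\mapsto \Lambda\cap p\Lambda^\#$, suitably rescaled; another is to pass to the sublattice $\{x: Q(x)\in p\mathbb{Z}\}$ and rescale by $1/p$. The aim is to obtain a lattice $\Lambda'$ with $\mathrm{ord}_p(\mathrm{disc}(\Lambda'))\in\{0,1\}$. The key lemma is that $O(\Lambda_p)=O(\Lambda'_p)$ inside $O(V_p)$. This holds because the map is canonical in terms of $\Lambda_p$ and $\Lambda_p^\#$, hence commutes with every isometry preserving $\Lambda_p$. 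Conversely, the map can be inverted canonically, so any isometry preserving $\Lambda'_p$ also preserves $\Lambda_p$.

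Doing this at every $p\mid N$ gives a lattice $\Lambda'$ with $O(\widehat{\Lambda})=O(\widehat{\Lambda'})$. Since the space $\mathcal{M}(O(V),O(\widehat\Lambda),W)$ depends only on the open compact subgroup, the two spaces of orthogonal modular forms coincide. The Hecke operators $T_p$ for $p\nmid N$ are defined by the same double cosets, so they agree as well. Rescaling $Q$ changes $V$ by a similitude but leaves $O(V)$ and the weight unchanged, apart from tracking how $\theta_d$ transforms under the rescaling.

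\textbf{Step 2: identify the reduced genus.} The primes in $N_1$ and $N_2$ should reduce to half-discriminant exactly divisible by $p$. The primes in $N_3$ should reduce to the $p^2$-type that is still residually unramified. At primes in $N_4$ the reduced lattice should be unimodular-like, but the reduction loses information, so the group is only an index-type subgroup. Here one must decompose: the fixed vectors of $O(\widehat{\Lambda})$ correspond to an Eichler-type level structure. On the modular side this gives the sum over $M_4\mid N_4$ of oldforms of level $M_4^2$. I would first check this factor on a single prime with a small example, since it explains the direct sum.

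\textbf{Step 3: invoke Hein--Tornaria--Voight.} Apply \cite{heintornariavoight} to the reduced lattice, which corresponds via the even Clifford functor to a residually unramified order. This gives the isomorphism with $S_{2k+2}$, with newness at $N_1$ and with the characters $\varepsilon_{N_1}$ and $\varepsilon_d$. The twist by $\theta_d$ corresponds to the character $\varepsilon_d$, and the level $N_3^2$ comes from their treatment of square-level orders, or from twisting newforms by quadratic characters.

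\textbf{Main obstacle.} The main obstacle is Step 2 at the primes dividing $N_4$ (and possibly $N_3$). There the Watson map does not land in a case covered by HTV. One must then compare $O(\widehat\Lambda)$ with a stabiliser of a pair of lattices, and compute the dimension and Hecke structure of the resulting space by hand. I would try doing this via local representation theory of $O(V_p)\cong PGL_2(\mathbb{Q}_p)\times\{\pm1\}$: one needs the dimension of fixed vectors under the corresponding open compact subgroup, counted for each conductor $M_4^2$. Matching these multiplicities with the oldform and newform decomposition would yield the stated sum.
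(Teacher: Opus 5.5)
Your Step 1 is where the argument breaks. You claim the Watson map can be applied at every $p\mid N$ with $O(\Lambda_p)=O(\Lambda'_p)$, but this is only true at $p\mid N_1N_2$.

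There $\Lambda_p\sim\langle a,p,pb\rangle$, and $\Lambda'_p=\Lambda_p\cap p\Lambda_p^{\#}$ is exactly $\psi_{(p)}(\Lambda_p)$. One checks $p(\Lambda'_p)^{\#}=\Lambda_p$, so your ``canonical inverse'' argument does give equality of orthogonal groups. This is a more elementary route than the paper's, which uses Allcock's maximality theorem for elementary lattices plus the norm-ideal count of index-$p$ sublattices.

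At $p\mid N_3N_4$ it fails. Here $\Lambda_p\sim\langle 1,a,p^2b\rangle$ with basis $v_1,v_2,v_3$, and you get $\Lambda'_p=p\mathbb{Z}_pv_1+p\mathbb{Z}_pv_2+\mathbb{Z}_pv_3$. This is $p^2$-modular, i.e.\ a rescaled unimodular lattice, and $p(\Lambda'_p)^{\#}=p^{-1}\Lambda'_p\neq\Lambda_p$. Moreover, $O(\Lambda'_p)$ acts on $\Lambda'_p/p\Lambda'_p$ through all of $O_3(\mathbb{F}_p)$. It therefore moves the anisotropic line $\mathbb{F}_p\bar v_3$ that determines $\Lambda_p$, so $O(\Lambda_p)\subsetneq O(\Lambda'_p)$. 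Your other candidate $\{x: Q(x)\in p\mathbb{Z}_p\}$ is not even a lattice at $p\mid N_3$, since $x_1^2+ax_2^2$ factors mod $p$ when $\left(\frac{-a}{p}\right)=1$.

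So $O(\widehat{\Lambda})=O(\widehat{\Lambda'})$ fails as stated, and your Step 2 tacitly admits this at $N_4$. The resulting ``main obstacle'' is a by-hand computation of fixed vectors and Hecke structure for a non-maximal level at $N_4$. You leave it unresolved, so the proposal does not prove the theorem.

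The missing idea is that nothing needs to be done at $p\mid N_3N_4$. By Lemma \ref{ram}, the local Clifford order of $\langle 1,a,p^2b\rangle$ is already residually unramified:
\begin{itemize}
\item when $\left(\frac{-a}{p}\right)=1$ it is Eichler of discriminant $p^2$ with $\varepsilon_p=+1$;
\item when $\left(\frac{-a}{p}\right)=-1$ it is the residually inert non-Eichler order with $\varepsilon_p=-1$.
\end{itemize}
The only residually ramified primes are those dividing $N_1N_2$. So you apply $\hat\psi_{(p)}$ only for $p\mid N_1N_2$ and rescale by $1/(N_1N_2)$. The resulting order is residually unramified everywhere with $D=N_1$, $N^+=N_2N_3^2$ and $N^-=N_1N_4^2$. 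Theorem \ref{thm: 6.12 in HTV} then applies directly; it does not require square-free discriminant.

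The sum over $M_4\mid N_4$ is then not something to derive via local representation theory. It is precisely HTV's sum over $M\mid N^-$ with $v_p(M)\equiv v_p(N^-)\bmod 2$, which forces $M=N_1M_4^2$. Likewise, the factor $N_3^2$ in the level is just the $N^+$-contribution of Eichler orders of level $p^2$, with no quadratic twisting involved.
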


As previously mentioned, such theorems cannot immediately be proved using the results of Hein-Tornaria-Voight since the associated quaternion orders are residually ramified at at least one prime. Our general strategy in proving such a result is to instead perform certain Watson-style transformations to $\Lambda$, producing a new even lattice $\Pi$ of half-discriminant $N'$ for some $N'|N$ while preserving the adelic orthogonal group (see \cite{watson1953some}, \cite{watson1963class}). This is enough to prove a bijection between $\text{Cl}(\Lambda)\rightarrow\text{Cl}(\Pi)$ that respects the $p$-neighbour relation, hence providing a Hecke-equivariant isomorphism between the associated Hecke-modules of orthogonal modular forms. One can then appeal to the above mentioned result of Hein-Tornaria-Voight to get the required Hecke-equivariant transfer to classical spaces of modular forms. 

In terms of spaces of orthogonal modular forms, one can view the application of such transformations as lowering the level of the forms, hence also shedding light on the newform theory for spaces of orthogonal modular forms. In particular, the theorem above can often be interpreted as saying that various spaces of orthogonal modular forms attached to $\Lambda$ consist entirely of oldforms.

\section*{Acknowledgements}
This work is the product of a collaboration between the authors. The first named author became interested in this project while undertaking his second year of undergraduate studies, and was funded by an EPSRC Vacation Internship. The second named author became interested in this project during her PhD studies. Both were supervised by the third named author. We thank both EPSRC and Lancaster University for making the internship and the PhD a possibility.

\section{Setup and Notation}

We will work with a similar setup to the paper \cite{delaurifretwell}, but it will be fruitful for the reader for us to fully recap the necessary background and notation used there.

\subsection{Quadratic Spaces and Lattices}

Let $(V,Q)$ be a quadratic space over a field $F$ with $\text{char}(F)\neq 2$, i.e.\ an $F$-vector space $V$ equipped with a map $Q: V \rightarrow F$ such that: \begin{itemize}
\item{$Q(\alpha v) = \alpha^2 Q(v)$ for all $\alpha\in F$ and $v\in V$,}
\item{the map $T(v,w) := Q(v+w) - Q(v) - Q(w)$ is bilinear.}
\end{itemize} Note that as a consequence of our definition we relate the quadratic map and the symmetric bilinear form via $T(v,v) = 2Q(v)$. We will also assume from now on that $n = \text{dim}_{\mathbb{Q}}(V) < \infty$.

Given any basis $\{e_1,e_2,...,e_n\}$ for $V$ we define the corresponding Gram matrix $T = (T(e_i,e_j))_{1\leq i, j\leq n}$ and the discriminant $\mathrm{disc}(V) = [\text{det}(T)]\in F/F^{\times 2}$. We say that $(V,Q)$ is non-degenerate if $\mathrm{disc}(V) \neq 0$.

An isometry between quadratic spaces $(V_1,Q_1)$ and $(V_2,Q_2)$ over $F$ is an isomorphism $\phi: V_1\rightarrow V_2$ such that $Q_2(\phi(v)) = Q_1(v)$ for all $v\in V_1$. If an isometry exists we will write $(V_1,Q_1)\sim (V_2,Q_2)$. We will also write $(V,Q)\sim \langle A_1,A_2,...,A_m\rangle$ to signify that $(V,Q)$ is isometric to a quadratic space whose Gram matrix under some basis is block diagonal with blocks $A_i$ (with lower case $a_i$ being reserved for the special case of a $1\times 1$ block). The dimension $n$ and the discriminant $\mathrm{disc}(V)$ are isometry invariants of the quadratic space.

It is very well known that every quadratic space $(V,Q)$ over $F$ can be diagonalised, so that $(V,Q)\sim \langle a_1,a_2,...,a_n\rangle$ for some $a_i\in F$. The discriminant is then given by the product $\mathrm{disc}(V) = [a_1a_2...a_n]$.

The set of self-isometries of a quadratic space $(V,Q)$ over $F$ gives the orthogonal group of $V$: \[O(V) = \{\phi\in\text{GL}(V)\,|\, Q(\phi(v)) = Q(v)\}.\] When $(V,Q)$ is non-degenerate every element of $O(V)$ has determinant $\pm 1$ and the kernel of the determinant map is the special orthogonal group $SO(V)$. This gives an isomorphism $O(V)/SO(V) \cong \{\pm 1\}$ and when $n$ is odd the non-trivial coset is represented by $-I$.

If $(V,Q)$ is a quadratic space over $F$ and $\lambda\in F$ then we may scale the quadratic map by $\lambda$ to get the quadratic space $(V, Q^{(\lambda)})$ with quadratic map $Q^{(\lambda)}:  V \to F$ given by $Q^{(\lambda)}(v) := \lambda Q(v)$ for all $v\in V$. The associated symmetric bilinear form is then $T^{(\lambda)}(v,w) = \lambda T(v,w)$. If $T$ is a Gram matrix for $(V,Q)$ under some basis then the Gram matrix of $(V,Q^{(\lambda)})$ under the same basis is $\lambda T$. Note that this notion of scaling is not the same as scaling a basis of $V$ by $\lambda$, which would produce a quadratic space with Gram matrix $\lambda^2 T$.

From now on let $(V,Q)$ be a non-degenerate quadratic space over $\mathbb{Q}$ of dimension $n$. Attached to $V$ are various completed quadratic spaces $(V_{\infty},Q)$ and $(V_p,Q)$ for $p$ prime (these being quadratic spaces over $\mathbb{R}$ and $\mathbb{Q}_p$ respectively). The underlying vector spaces are: \begin{align*}V_{\infty} &= V\otimes_{\mathbb{Q}}\mathbb{R}\\ V_p &= V\otimes_{\mathbb{Q}}\mathbb{Q}_p \end{align*} and the quadratic map $Q$ naturally extends to these spaces via $Q(v\otimes \alpha) = \alpha^2Q(v)$. 

The notion of isometry extends naturally to local quadratic spaces, and the celebrated Hasse-Minkowski theorem (Chapter $6$ of \cite{cassels2008rational}) tells us that the global isometry class of $(V,Q)$ is determined by the local isometry classes of $(V_{\infty},Q)$ and $(V_p,Q)$. The classification of local quadratic spaces is known, and consequently the global isometry class of $(V,Q)$ is fully determined by the following list of invariants:

\begin{itemize}
\item{The discriminant $\mathrm{disc}(V)\in\mathbb{Q}^{\times}/\mathbb{Q}^{\times 2}$,}
\item{The signature $(r,s)$ of $(V_{\infty},Q)$, the unique integers $r,s\geq 0$ such that: \[(V_{\infty},Q) \sim \langle 1,1,...,1,-1,-1,...,-1\rangle,\] with $r$ occurrences of $1$ and $s$ occurrences of $-1$ (noting that $n = r+s$).}
\item{The Hasse invariant $S(V_p)\in\{\pm 1\}$ of $(V_p,Q)$, given by: \[S(V_p) = \prod_{1\leq i<j\leq n}(a_i,a_j)_p,\] where \[(V_p,Q) \sim \langle a_1,a_2,...,a_n\rangle\] is any diagonalisation and $(a,b)_p$ is the Hilbert symbol (when $n = 1$ we declare $S(V_p) = 1$).}
\end{itemize} The local Hasse invariants satisfy certain compatibility conditions (the second of which follows from Hilbert Reciprocity): 

\begin{itemize}
\item{$S(V_p) = 1$ for all but finitely many $p$}
\item{$\prod_{p\leq \infty}S(V_p) = 1$ (where $S(V_{\infty}) = (-1)^{\frac{s(s-1)}{2}}$).}
\end{itemize}

Conversely, suppose these invariants are chosen freely to satisfy the compatibility conditions. Then there exists a rational quadratic space giving rise to these invariants, with only two exceptions: 

\begin{itemize}
\item{$n=1$ and $S(V_p) = -1$ for some finite prime $p$,}
\item{$n=2$, $\mathrm{disc}(V_p) = [-1]$ and $S(V_p) = -1$ for some finite prime $p$.}
\end{itemize}

The completed spaces come equipped with local orthogonal groups: \begin{align*}O(V_{\infty}) &= \{\phi\in\text{GL}(V_{\infty})\,|\, Q(\phi(v)) = Q(v)\}\\ O(V_p) &= \{\phi\in\text{GL}(V_p)\,|\, Q(\phi(v)) = Q(v)\}\end{align*}

If $V$ is positive definite, i.e.\ it has signature $(r,s) = (n,0)$, then $O(V_{\infty}) \cong O_n(\mathbb{R})$ is the standard compact orthogonal group.

A lattice $\Lambda\subset V$ is a free $\mathbb{Z}$-module of rank $n$. Every basis of $\Lambda$ gives rise to a basis for $V$ and a Gram matrix $T$. The discriminant of $\Lambda$ is then $\mathrm{disc}(\Lambda) = [\text{det}(T)]\in\mathbb{Q}^{\times}/(\mathbb{Z}^{\times})^2 = \mathbb{Q}^{\times}$.

If $\Lambda_1\subset V_1$ and $\Lambda_2\subset V_2$ are lattices then an isometry is an isometry $\phi: V_1 \rightarrow V_2$ such that $\phi(\Lambda_1) = \Lambda_2$. The rank $n$ and discriminant $\mathrm{disc}(\Lambda)$ are both isometry invariants of a lattice. The self-isometries of $\Lambda$ give the orthogonal group of $\Lambda$: \[O(\Lambda) = \{\phi\in O(V)\,|\, \phi(\Lambda) = \Lambda\}.\]

If $\Lambda$ satisfies $T(x, y) \in \mathbb{Z}$ for all $x, y \in \Lambda$ then we call $\Lambda$ integral, and if further we have that the scale ideal is equal to $\mathbb{Z}$, i.e.\ $\mathfrak{s}(\Lambda) = \langle T(x,y)\,|\, x,y\in\Lambda \rangle = \mathbb{Z}$ then it is primitive. A primitive lattice is even if $Q(x) \in  \mathbb{Z}$ for all $x \in \Lambda$, otherwise it is odd. We define the norm ideal of $\Lambda$ to be the ideal $\mathfrak{n}(\Lambda) = \langle Q(x)\,|\, x\in\Lambda\rangle \subset\mathbb{Q}$.

One defines the discriminant group of $\Lambda$ to be $\Delta(\Lambda) = \Lambda^{\ast}/\Lambda$, where the dual lattice of $\Lambda$ with respect to $T$ is $\Lambda^{\ast}:= \{v \in V \,:\, T(v, \Lambda) \subseteq \mathbb{Z}\}$. Whenever $\Delta(\Lambda)$ is a direct sum of cyclic groups of prime order for an integral lattice, then the lattice is called elementary.

As with quadratic spaces, lattices can be completed to give local lattices for $p$ prime: \begin{align*}\Lambda_{\infty} &= \Lambda\otimes_{\mathbb{Z}}\mathbb{R} = V_{\infty}\\ \Lambda_p &= \Lambda\otimes_{\mathbb{Z}}\mathbb{Z}_p\subset V_p\end{align*} One might hope that this collection of local lattices completely determines $\Lambda$ up to isometry, but the analogue of the Hasse-Minkowski theorem fails. To quantify the failure, one considers the genus and class set: \begin{align*}\text{gen}(\Lambda) &= \{\Pi\subset V\,|\, \Pi_p \sim\Lambda_p \text{ for all } p\}\\ \text{Cl}(\Lambda) &= O(V)\backslash\text{gen}(\Lambda)\end{align*} The class set is known to be a finite set by a geometry of numbers argument, and its size $h = |\text{Cl}(\Lambda)|$ is the class number of $\Lambda$.

We can still define the notions of dual lattice, discriminant group, integral lattice, primitive lattice, scale ideal and norm ideal in the local lattice case by replacing $\mathbb{Z}$ with $\mathbb{Z}_p$. The notion of even lattice can also be defined over $\mathbb{Z}_2$ in the obvious way.

Since one direction of Hasse-Minkowski for lattices holds (i.e.\ global to local), it's still useful to have a classification of local lattices up to isometry (treated for example in Gerstein \cite{gerstein2008basic}). Indeed, for every prime $p$ it is known that $\Lambda_p$ has a Jordan decomposition, i.e.\ $\Lambda_p = \Lambda_{p,1} \perp \dots \perp  \Lambda_{p,k}$ where each $\Lambda_{p,i}$ has rank $n_i$ and is $p^{a_i}$-modular for some $a_i\in\mathbb{Z}$, i.e.\ has Gram matrix $p^{a_i} U_i$ for some $U_i\in\text{GL}_{n_i}(\mathbb{Z}_p)$. We usually rearrange the $\Lambda_{p,i}$ so that $a_1 < a_2 < ... < a_k$ and refer to the case $a_i = 0$ as being a unimodular lattice.

When $p$ is odd it is known that $\Lambda_{p,i}\sim p^{a_i}\langle 1,1,...,1,\delta_i\rangle$ for a unique $\delta_i\in\{1,r\}$, where $r$ is a quadratic non-residue mod $p$ (we will use this notation later). The collection of invariants $(n_i, a_i, \delta_i)_i$ completely determines $\Lambda_p$ up to isometry and all possibilities can occur (completing the classification of local lattices in this case). We will often use Conway-Sloane $p$-adic symbols to denote isometry classes of local lattices (see \cite[Chapter 15]{conway2013sphere} or \cite{allcock2020conway} for details). 

When $p=2$ it is not necessarily true that $\Lambda_{2,i}$ can be diagonalised. However it is known that: \[\Lambda_{2,i} \sim\begin{cases} 2^{a_i}\langle \delta_1,\delta_2,...,\delta_n\rangle & \text{ if } \mathfrak{n}(\Lambda_{2,i}) = 2^{a_i-1}\mathbb{Z}_2\\ 2^{a_i}\left\langle\begin{pmatrix}0 & 1\\ 1 & 0\end{pmatrix},\begin{pmatrix}0 & 1\\ 1 & 0\end{pmatrix},...,\begin{pmatrix}0 & 1\\ 1 & 0\end{pmatrix},A\right\rangle & \text{ if } \mathfrak{n}(\Lambda_{2,i}) = 2^{a_i}\mathbb{Z}_2 \end{cases},\] where each $\delta_i \in\{\pm 1,\pm 5\}$ and $A\in\left\{\begin{pmatrix}0 & 1\\ 1 & 0\end{pmatrix},\begin{pmatrix}2 & 1\\ 1 & 2\end{pmatrix}\right\}$. The lattices of the first kind are often referred to as Type I lattices and the second as Type II lattices. 

Unfortunately, the Jordan decomposition is not enough to classify $2$-adic lattices up to isometry. However, it is still possible to complete the classification by giving further equivalences, for example by the result of O'Meara (e.g.\ Theorem $8.11$ of \cite{gerstein2008basic}). We will often use Conway-Sloane $2$-adic symbols to denote isometry classes of local lattices up to isometry, and assume the result that two $2$-adic lattices are isometric if and only if their symbols are equal or related by a sequence of Oddity Fusion and Sign Walking rules (see \cite[Chapter 15]{conway2013sphere} or \cite{allcock2020conway} for details).

As a consequence of the classification of local lattices we find that when $\Lambda, \Pi \subset V$ are integral lattices with $ \mathrm{disc}(\Lambda) = \mathrm{disc}(\Pi)$, then $\Lambda_p \simeq \Pi_p$ for all $p \nmid 2\,\mathrm{disc}(\Lambda)$ (since the local lattice must be diagonalisable and unimodular, hence completely determined by its discriminant).

As with quadratic spaces, we often wish to know when a collection of local lattices $\{\Lambda_p\}_{p\leq \infty}$ arises from a global lattice $\Lambda$. By Lemma $5.1$ of Chapter $9$ of Cassels \cite{cassels2008rational}, $\Lambda$ will exist if and only if the corresponding quadratic space $(V,Q)$ in which it embeds exists (relative to the collection of local quadratic spaces $(V_p,Q)$ for each $p\leq \infty$). This is simply a case of checking that the global compatibility conditions mentioned earlier are satisfied.

Kneser showed in \cite{Kneser1957} that it is often possible to compute the class set $\text{Cl}(\Lambda)$ via his method of iterated $p$-neighbours. Let $p$ be prime and $\Lambda$ be an integral lattice in some quadratic space $(V,Q)$ over $\mathbb{Q}$. We say that an integral lattice $\Pi \subset V$ is a $p$-neighbour of $\Lambda$ if $\Lambda/(\Lambda \cap \Pi) \cong \Pi/(\Lambda\cap\Pi) \cong \mathbb{Z}/p\mathbb{Z}$, i.e.\ the intersection has index $p$ in both $\Lambda$ and $\Pi$. We write $\Lambda\sim_p \Pi$ to denote that $\Pi$ is a $p$-neighbour of $\Lambda$. 

It is known that if $\Pi$ is a $p$-neighbour of $\Lambda$ then $\mathrm{disc}(\Lambda) = \mathrm{disc}(\Pi)$ and $\Pi_q = \Lambda_q$ for all primes $q \neq p$. Further, when $p \nmid 2\,\mathrm{disc}(\Lambda)$ then $\Lambda_p \sim \Pi_p$, since both are local lattices that are unimodular with the same discriminant. In this case we then have that $\Pi \in \text{gen}(\Lambda)$, so that $p$-neighbour moves allow us to navigate the genus of $\Lambda$ in a way that can generate new isometry classes. In particular, Kneser showed that if $(V,Q)$ is a non-degenerate quadratic space over $\mathbb{Q}$ of dimension $n\geq 3$, then for any two classes $[\Lambda_1], [\Lambda_2] \in \text{Cl}(\Lambda)$ there exists a finite set of primes $p_1, p_2,\dots, p_s$ and integral lattices $\Pi_i\subset V$ such that:
$$\Lambda \sim_{p_1} \Pi_1 \sim_{p_2} \dots \sim_{p_s} \Pi_s = \Lambda_i.$$ 
The lattices considered in this paper have only one spinor genus, and in this case it is known that we may take $p_i = p$ for all $i$, for any fixed prime $p\nmid 2\,\mathrm{disc}(\Lambda)$. 

As explained in \cite{voight2023kneser}, when $p\nmid 2\,\mathrm{disc}(\Lambda)$ and $n\geq 3$ the $p$-neighbours of $\Lambda$ are in bijection with isotropic lines in $\Lambda/p\Lambda$, and so are easy to compute. In particular, if $[v]\in\Lambda/p\Lambda$ generates an isotropic line then the corresponding $p$-neighbour is: \[\Pi = \mathbb{Z}\left(\frac{v}{p}\right) + \{w\in \Lambda\,|\,T(w,v)\in p\mathbb{Z}\}.\] In particular, the number of $p$-neighbours $N(p)$ is finite. When $n = 3$ we have $N(p) = p+1$ (the number of solutions to the quadric $Q(x) \equiv 0 \bmod p$ in $\mathbb{P}^2(\mathbb{F}_p)$).

Both the genus and the class set of $\Lambda$ have adelic interpretations. Consider the group of adelic points of the global orthogonal group $O(V)$ with respect to the local orthogonal groups $O(\Lambda_p)$, i.e.\: 
$$O(\widehat{V}) = \{(g_p)_p \in \prod_pO(V_p):g_p \in O(\Lambda_p) \text{ for all but finitely many primes } p\}.$$ Then $O(\widehat{V})$ acts transitively on $\text{gen}(\Lambda)$ and the stabiliser of $\Lambda$ is $O(\widehat{\Lambda}) = \prod_p O(\Lambda_p)$. Hence there are bijections: \begin{align*}\text{gen}(\Lambda) &\longleftrightarrow O(\widehat{V})/O(\widehat{\Lambda})\\ \text{Cl}(\Lambda) &\longleftrightarrow O(V)\backslash O(\widehat{V})/O(\widehat{\Lambda}) \end{align*}

As discussed in Hein \cite[Proposition 3.1.12]{hein2016orthogonal}, there is an adelic description of $p$-neighbours. If $p \nmid 2\, \mathrm{disc}(\Lambda)$ and $n\geq 3$ then $V_p$ is isotropic and we can fix a basis for $V_p$ so that $V_p = \mathbb{H} \perp \mathbb{H}^{\perp}$, with $\mathbb{H}\sim\left\langle\begin{pmatrix}0 & 1\\ 1 & 0\end{pmatrix}\right\rangle$ a hyperbolic plane. We can then construct the element $\widehat{p} \in O(\widehat{V})$ such that \[\widehat{p}_{(q)} = \begin{cases}
    \left(\begin{smallmatrix}
    p&0&0&\dots&0\\
    0& \frac{1}{p}&0&\dots&0\\
    0&0&1&\dots&0\\
    \vdots &&\ddots&\ddots&\\
    0&\dots&\dots&0&1
\end{smallmatrix}\right),&\text{if } q = p\\
I_n,&\text{if } q \neq p \end{cases}\] The $p$-neighbours of $\Lambda$ are then shown to be in bijection with coset representatives in the decomposition:
\[
O(\widehat{\Lambda})\, \widehat{p}\, O(\widehat{\Lambda}) = \bigsqcup_{m=1}^{N(p)} \widehat{p}_m O(\widehat{\Lambda}),
\]

The theory of $2$-neighbours is a little trickier to handle due to the fact that the local classification is more involved. However, when $\Lambda$ is even and $v_2(\mathrm{disc}(\Lambda)) \leq 1$ (which will often be the case of interest in this paper), the theory as described above will still work for $p=2$. We refer the reader to \cite{schulze2020lecture} for more information.

\subsection{Ternary Lattices and Quaternion orders}\label{Ternary Lattices and Quaternion orders}

In this subsection we recall the Clifford correspondence between ternary lattices and quaternion orders. We assume that the reader is familiar with the theory of quaternion algebras and their orders. A thorough discussion of these topics is given in \cite{voight2021quaternion}.

Let $\Lambda = \mathbb{Z}e_1\oplus \mathbb{Z}e_2\oplus \mathbb{Z}e_3$ be an even ternary lattice in a quadratic space $(V,Q)$ over $\mathbb{Q}$. Suppose that the Gram matrix of $\Lambda$ in this basis is \[T = \begin{pmatrix} 2a & u & v\\ u & 2b & w\\ v & w & 2c \end{pmatrix}.\] The even Clifford algebra of $V$ is the associative $\mathbb{Q}$-algebra given by: \[B = \mathrm{Cl}^0(V) = \mathbb{Q}\oplus \mathbb{Q}e_1e_2 \oplus \mathbb{Q}e_2e_3\oplus \mathbb{Q} e_1e_3,\] with relations $e_i^2 = Q(e_i)$ and $e_ie_j + e_je_i = T(e_i,e_j)$. It is known that $B$ is a quaternion algebra and is definite if and only if $(V,Q)$ is positive definite. 

The lattice $\Lambda\subset V$ gives rise to an order of discriminant $\mathrm{discrd}(\mathcal{O}) = \frac{1}{2}\mathrm{disc}(\Lambda)$: \[\mathcal{O} = \mathrm{Cl}^0(\Lambda) = \mathbb{Z} \oplus \mathbb{Z}e_1e_2 \oplus \mathbb{Z}e_2e_3 \oplus \mathbb{Z}e_1e_3\subset B.\] Note that it is important that $\Lambda$ is even, since otherwise $\mathrm{Cl}^0(\Lambda)$ may not be closed under multiplication. If $\Lambda$ is not even then we may instead work with a suitable scaling of the quadratic map (although altering the discriminant). 

We wish to give a standard basis for the quaternion algebra $B$. In order to do this we work out the Gram matrix of the trace form $\text{trd}(\alpha\bar{\beta})$ on the basis given above to get the matrix: \[A = \begin{pmatrix}2 & u & w & v\\ u & 2ab & uw-bv & aw\\ w & uw-bv & 2bc & cu\\ v & aw & cu & 2ac\end{pmatrix}.\]

The matrix: \[M = \begin{pmatrix}1 & 0 & 0 & 0\\ -u & 2 & 0 & 0\\ bv-uw & w & u & -2b\\ u^2w-buv-2abw & 2bv-uw & 4ab-u^2 & 0\end{pmatrix}\] is then such that $D = MAM^T$ is the diagonal matrix: \[D = 
\begin{pmatrix}2 & 0 & 0 & 0\\ 0 & 2(4ab-u^2) & 0 & 0\\ 0 & 0 & 2bN & 0\\ 0 & 0 & 0 & 2bN(4ab-u^2)
\end{pmatrix}.\] It is now clear that $B = \left(\frac{u^2-4ab\,,\,-bN}{\mathbb{Q}}\right)$ under the basis $\{1,i,j,k\}$ with: \begin{align*}
    i & = 2e_1e_2-u,\\ j &= we_1e_2 + ue_2e_3 - 2be_1e_3 + (bv-uw),\\ k &= (2bv-uw)e_1e_2 + (4ab-u^2)e_2e_3 + (u^2w-buv-2abw).
\end{align*} (It is easily checked that the standard relations $i^2 = u^2-4ab, j^2 = -bN$ and $ij=k=-ji$ are satisfied).\\
Inverting the change of basis matrix $M$ gives: \[M^{-1} = \begin{pmatrix}1 & 0 & 0 & 0\\ \frac{u}{2} & \frac{1}{2} & 0 & 0\\ \frac{w}{2} & \frac{uw-2bv}{8ab-2u^2} & 0 & \frac{1}{4ab-u^2}\\ \frac{v}{2} & \frac{2aw-uv}{8ab-2u^2} & -\frac{1}{2b} & \frac{u}{8ab^2-2bu^2}
\end{pmatrix}\] so that \begin{align*}e_1e_2 &= \frac{u+i}{2}\\ e_2e_3 &= \frac{w(4ab-u^2) + (uw-2bv)i + 2k}{8ab-2u^2}\\ e_1e_3 &= \frac{bv(4ab-u^2) + b(2aw-uv)i - (4ab-u^2)j + uk}{8ab^2-2bu^2}\end{align*} so that in terms of the standard basis $\{1,i,j,k\}$ we have: \small \[\mathcal{O} = \mathbb{Z}\oplus\mathbb{Z}\frac{u+i}{2}\oplus\mathbb{Z}\frac{w(4ab-u^2) + (uw-2bv)i + 2k}{8ab-2u^2}\oplus \mathbb{Z}\frac{bv(4ab-u^2) + b(2aw-uv)i - (4ab-u^2)j + uk}{8ab^2-2bu^2}.\] 
\normalsize 
It is easily checked that the Gram matrix of this order is equivalent to the one above and that $\mathrm{discrd}(\mathcal{O}) = \frac{1}{2}\mathrm{disc}(\Lambda)$ under this basis, as expected.

The association $\Lambda \rightarrow\mathrm{Cl}^0(\Lambda)$ has been studied many times historically and can be viewed as a generalisation of Gauss' correspondence between integral binary quadratic forms and ideals in quadratic rings. It has many interesting functorial properties, which are explored in Chapter $22$ of \cite{voight2021quaternion}. In particular, the correspondence is functorial in the underlying rings, so that we may compatibly extend the recipe to connect local lattices and local orders. Functoriality implies the existence of a bijection $\text{Cl}(\Lambda) \longleftrightarrow\text{Typ}(\mathcal{O})$ that identifies isometries on the LHS with isomorphisms on the RHS (where $\text{Typ}(\mathcal{O})$ is the set of orders in $B$ that are locally isomorphic to $\mathcal{O}$ everywhere, up to global isomorphism).

\subsection{Classical Modular Forms} \label{subsec: classical MFs}
We assume that the reader is familiar with the theory of modular forms and Hecke operators (see \cite{diamond2005first} and \cite{cohen2017modular} for a detailed introduction). We denote by $S_k(\Gamma_0(N))$ the space of cusp forms of weight $k$ and level $N$, and $T_p$ the usual Hecke operator at $p$ acting on this space. Recall that $S_k(\Gamma_0(N))$ has a basis of eigenforms for the algebra generated by the Hecke operators $T_p$ for $p\nmid N$ (we will not need to consider the other Hecke operators in this paper).

The space of oldforms is given by:
\begin{align*}
S_{k}^{\text{old}}(\Gamma_0(N)) = \sum_{\substack{M \mid N \\M\neq N}} \sum_{d \mid \tfrac{N}{M}} \alpha_d(S_{k}(\Gamma_0(M))).
\end{align*} where $\alpha_d(f)(\tau) = f(d\tau)$ are the degeneracy maps. The new subspace is then defined via the orthogonal direct sum decomposition (with respect to the Petersson inner product): \[S_k(\Gamma_0(N)) = S_k^{\text{old}}(\Gamma_0(N)) \oplus S_k^{\text{new}}(\Gamma_0(N))\] More generally, if $M|N$ then the $M$-new subspace is given by: \[S_k^{M\mathrm{-new}}(\Gamma_0(N)) = \displaystyle\sum_{M\mid M^{\prime}\mid N} \sum_{d \mid \frac{N}{M^{\prime}}} \alpha_d(S_k^{\text{new}}(\Gamma_0(M^{\prime})).\]

We also recall that there are additional involutions acting on the spaces $S_k(\Gamma_0(N))$. For each $d$ where $\gcd(d,N/d) = 1$, the Atkin-Lehner involution $W_d$ acts on $S_{k}(\Gamma_0(N))$ via $f \mapsto f|_{k}[W_d]$ where:    	
\begin{align*}
    W_d = \left(\begin{smallmatrix}
        dx & y \\
        Nz & d
    \end{smallmatrix}\right) \, \text{ for integers $x,y,z$ such that } dx - \tfrac{Nyz}{d} = 1.
\end{align*}
Here the weight $k$ slash operator is given by $(f|_{k} [\gamma])(\tau) = \text{det}(\gamma)^{k-1}(c\tau + d)^{-k} f(\gamma \cdot \tau)$ with $\gamma\cdot \tau$ being the Mobius action of $\gamma = \begin{pmatrix}a & b\\ c & d\end{pmatrix}\in\text{GL}_2^+(\mathbb{Q})$ on $\tau\in\mathcal{H}$ (complex upper half plane). 

In order to keep track of Atkin-Lehner eigenvalues, the following notation from \cite{heintornariavoight} will be useful. Let $N$ and $d$ be positive integers. Then the corresponding sign vector is $\varepsilon^{(N)}_d= (\eta_p)_{p^e\mid \mid N}$ where \[\eta_p = \begin{cases}+1 & \text{ if } p\nmid d\\ -1 & \text{ if } p\mid d\end{cases}.\]
Sign vectors will later be multiplied component-wise.

For each sign vector $\varepsilon^{(N)}_d$, we have eigenspaces of the form
\begin{align*}\label{eq:neweps}
  S_{k}\!\left(\Gamma_0(N), \varepsilon^{(N)}_d\right)
  \;:=\;
  \Bigl\{
    f \in S_{k}\!\left(\Gamma_0(N)\right)
    :
    W_q f = \eta_p f
    \text{ for all } q = p^e \| N
  \Bigr\}.
\end{align*} Since the Atkin-Lehner involutions $W_d$ commute with degeneracy operators and the Hecke operators $T_p$ when $p \nmid N$ and $\gcd(p,d) = 1$, this induces decompositions of Hecke-modules for each $M|N$:  \[
  S^{M\mathrm{-new}}_{k}\!\left(\Gamma_0(N)\right)
  \;=\;
  \bigoplus_{\varepsilon^{(N)}_d}
  S^{M\mathrm{-new}}_{k}\!\left(\Gamma_0(N), \varepsilon^{(N)}_d\right).
\] where $\varepsilon_d^{(N)}$ runs over all possible sign vectors corresponding to divisors $d \,|\, N$.

\subsection{Orthogonal Modular Forms}\label{subsec: OMFs}

We now introduce the theory of (definite) orthogonal modular forms. This is a special case of the wider theory of algebraic modular forms introduced by Gross in \cite{gross1999algebraic} and discussed in detail for our particular case in \cite{greenbergvoight}. Let $(V,Q)$ be a positive definite quadratic space over $\mathbb{Q}$ and $\Lambda\subset V$ be a lattice. Also let $W$ be a finite-dimensional representation of $O(V)$. The corresponding space of orthogonal modular forms of weight $W$ and level $O(\widehat{\Lambda})$ is then defined to be:

\[\mathcal{M}(O(V),O(\widehat{\Lambda}),W) = \left\{\varphi : O(\widehat{V}) \to W \,\Bigg|\, \begin{array}{c}\varphi(\gamma gk) = \gamma \varphi(g) \text{ for all }\\ (\gamma, g, k) \in O(V)\times O(\widehat{V})\times O(\widehat{\Lambda})\end{array}\right\}.\] These spaces are finite-dimensional, since any such function is completely determined by its values on $O(V)\backslash O(\widehat{V})/O(\widehat{\Lambda}) \leftrightarrow \text{Cl}(\Lambda)$.

For each $\hat{g}\in O(\widehat{V})$ there exists a Hecke operator $T_{\hat{g}}$ acting on $\mathcal{M}(O(V),O(\widehat{\Lambda}),W)$. In direct analogy with classical Hecke operators, one considers the (finite) double coset decomposition: \[O(\widehat{\Lambda})\,\hat{g}\,O(\widehat{\Lambda}) = \coprod_{i=1}^{r(\hat{g})}\hat{g}_i O(\widehat{\Lambda})\] and defines: \[T_{\hat{g}}(\varphi)(g) = \sum_{i=1}^{r(\hat{g})}\varphi(gg_i).\]

Suppose from now on that $n \geq 3$. If we take $\hat{g} = \hat{p}$ for any prime $p\nmid 2\,\mathrm{disc}(\Lambda)$ (where $\hat{p}\in O(\widehat{V})$ is as defined previously) then we have a family of Hecke operators $T_{\hat{p}}$ acting on $\mathcal{M}(O(V),O(\widehat{\Lambda}),W)$. We will often simply write these operators as $T_p$ for reasons that will become clear soon. The algebra generated by these operators is commutative. 

Recall that the coset representatives for $T_p$ can be identified with $p$-neighbours of $\Lambda$. Under the identification $O(\widehat{V})/O(\widehat{\Lambda}) \leftrightarrow \text{gen}(\Lambda)$ we then find that this operator simply averages the value of $\phi$ over the $p$-neighbours of the input lattice. In the special case where $W = \mathbb{C}$ is the trivial representation then \[\mathcal{M}(O(V),O(\widehat{\Lambda}),W) \cong \{\varphi:\text{Cl}(\Lambda) \rightarrow\mathbb{C}\} \cong \mathbb{C}^h\] and under the standard basis the operator $T_p$ simply counts the number of $p$-neighbours of $\Lambda_i$ that are isometric to $\Lambda_j$ (where $\text{Cl}(\Lambda) = \{[\Lambda_1],[\Lambda_2],...,[\Lambda_h]\}$). Alternatively, these operators represent the adjacency matrices of the (directed) $p$-neighbour graphs (with vertices for each of the classes $[\Lambda_i]$ and directed edges from $[\Lambda_i]$ to $[\Lambda_j]$ for each $p$-neighbour of $[\Lambda_i]$ that is isometric to $[\Lambda_j]$). 

It is known that the space $\mathcal{M}(O(V),O(\widehat{\Lambda}),W)$ has a basis of simultaneous eigenforms for the Hecke algebra generated by the operators $T_p$ for $p\nmid 2\,\mathrm{disc}(\Lambda)$ prime. Again, a lot of what is described above still works if we include the prime $p=2$ (e.g.\ if $\Lambda$ is even and $v_2(\mathrm{disc}(\Lambda)) \leq 1$).
    
Note that the constant function $\mathbbm{1}$ is an eigenvector for every Hecke operator $T_{\hat{g}}$, having eigenvalue $r(\hat{g})$ (i.e.\ when $\hat{g} = \hat{p}$ for $p\nmid 2\,\mathrm{disc}(\Lambda)$ the eigenvalue is $N(p)$, the total number of $p$-neighbours of $\Lambda$).

As with spaces of classical modular forms, it is possible to decompose into the direct product of an ``Eisenstein subspace" and a ``cuspidal subspace" (even though the locally symmetric space in this case has no cusps!). Without giving the full definition we remark that the Eisenstein subspace can only be non-trivial when the weight $W = \mathbb{C}$ is the trivial representation (see \cite[\S1.2]{rama2020computation}), and in all cases considered in this paper it is simply the one-dimensional space spanned by $\mathbbm{1}$. The cuspidal subspace $\mathcal{S}(O(V), O(\hat{\Lambda}), W)$ is then defined to be the full space $\mathcal{M}(O(V),O(\hat{\Lambda}), W)$ if $W$ is a non-trivial weight and the span of the Hecke eigenforms that are not $\mathbbm{1}$ when $W = \mathbb{C}$ is trivial (alternatively, the orthogonal complement of $\mathbbm{1}$ with respect to a suitable inner product, defined in \cite{hein2016orthogonal}). The Hecke operators are known to respect this decomposition.
    
Later we will make heavy use of theorems of Hein-Tornaria-Voight \cite{heintornariavoight}, proving the existence of Hecke-equivariant isomorphisms between certain spaces of orthogonal modular forms and spaces of classical modular forms. We will introduce these now.

From now on we specialise to the case of $n = 3$ (the case of interest in this paper). In this case it is well known that: \[O(V) \twoheadrightarrow O(V)/\{\pm I\} \cong SO(V) \cong B^{\times}/\mathbb{Q}^{\times}.\] Here $B = \mathrm{Cl}^0(V)$ is the even Clifford algebra of $V$, which in our case is a definite quaternion algebra over $\mathbb{Q}$. Any representation of $B^{\times}/\mathbb{Q}^{\times}$ can be lifted to one for $O(V)$ via the surjection above. 

One such family of representations comes from choosing a splitting: \[B^{\times}/\mathbb{Q}^{\times}\hookrightarrow \text{PGL}_2(\mathbb{C}).\] For each integer $k\geq 0$ the irreducible representation $W_{2k} = \text{Sym}^{2k}(\mathbb{C})\otimes\text{det}^{-k}$ of $\text{GL}_2(\mathbb{C})$ descends to $\text{PGL}_2(\mathbb{C})$ and so gives rise to an irreducible representation of $O(V)$. This representation can also be constructed by considering the action of $O(V)$ on a suitable space of harmonic polynomials on $V$ with respect to an explicit Laplacian $\Delta_Q$. See Example $3.5.3$ of \cite{hein2016orthogonal} for more details.

Secondly, we can construct certain characters of $O(V)$. Applying the reduced norm map gives \[\text{nrd}:B^{\times}/\mathbb{Q}^{\times} \longrightarrow \mathbb{Q}^{\times}/(\mathbb{Q}^{\times})^2.\] For each $d\in\mathbb{N}$ we get a character: \begin{align*} \phi_d:\mathbb{Q}^{\times}/(\mathbb{Q}^{\times})^2 &\longrightarrow \{\pm 1\}\\ a &\longmapsto \prod_{p\mid d}(-1)^{v_p(a)}. \end{align*} The composite $\theta_d = \phi_d \circ \text{nrd}$ defines spinor characters of $O(V)$ for each choice of $d$.

The following results will come in handy later (with the spaces $M(SO(V),SO(\widehat{\Lambda}),W)$ being defined similarly to above).

\begin{lem}\label{OtoSO}
For each lattice $\Lambda\subset V$ there is a Hecke-equivariant isomorphism: \[\mathcal{M}(O(V), O(\widehat{\Lambda}),W_{2k}\otimes \theta_d) \cong M(SO(V),SO(\widehat{\Lambda}), W_{2k}\otimes \theta_d)\]
\end{lem}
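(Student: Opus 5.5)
The plan is to exploit the fact that in odd dimension $n=3$ we have $O(V) = SO(V)\times\{\pm I\}$, and likewise $O(V_p) = SO(V_p)\times\{\pm I\}$ and $O(\Lambda_p) = SO(\Lambda_p)\times\{\pm I\}$ for every $p$, since $-I$ is central, has determinant $-1$ and preserves every lattice. Hence $O(\widehat{V}) = SO(\widehat{V})\cdot\{\pm 1\}$, where $\pm 1$ acts diagonally at all places, and $O(\widehat{\Lambda}) = SO(\widehat{\Lambda})\times\{\pm1\}^{\text{places}}$. The proposed isomorphism is restriction: send $\varphi\in\mathcal{M}(O(V),O(\widehat{\Lambda}),W_{2k}\otimes\theta_d)$ to $\varphi|_{SO(\widehat{V})}$.

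First I would check that the restriction lands in the right space. Left $SO(V)$-equivariance and right $SO(\widehat{\Lambda})$-invariance are immediate. For injectivity and surjectivity, take any $\hat g\in O(\widehat{V})$ and write $\hat g = \hat h\,\hat\epsilon$ with $\hat h\in SO(\widehat V)$ and $\hat\epsilon = (\epsilon_p)_p$, $\epsilon_p\in\{\pm I\}$, which lies in $O(\widehat{\Lambda})$. Then $\varphi(\hat g)=\varphi(\hat h)$, so $\varphi$ is determined by its restriction. Conversely, given $\psi$ on $SO(\widehat V)$, define $\varphi(\hat h\hat\epsilon)=\psi(\hat h)$; this is well defined because the decomposition is unique up to $SO$-components, which are absorbed by right $SO(\widehat\Lambda)$-invariance. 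We must then verify left equivariance under $-I\in O(V)$. We have $\varphi((-I)\hat g)=\varphi(\hat g)$, since the diagonal $-I$ lies in $O(\widehat\Lambda)$ and is central. So we need $-I$ to act trivially on $W_{2k}\otimes\theta_d$.

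This is the one real point to check. The action of $O(V)$ on $W_{2k}$ and on $\theta_d$ is pulled back through $O(V)\twoheadrightarrow O(V)/\{\pm I\}\cong SO(V)\cong B^\times/\mathbb{Q}^\times$, so $-I$ maps to the identity. Both $W_{2k}$ and $\theta_d$ are therefore trivial on $-I$ by construction. This gives the bijection, which is clearly linear.

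Finally, for Hecke equivariance at $p\nmid 2\,\mathrm{disc}(\Lambda)$, I would show that the element $\hat p$, which has determinant $1$, satisfies
\[O(\widehat\Lambda)\hat pO(\widehat\Lambda)=\bigsqcup_m \hat p_m O(\widehat\Lambda),\qquad SO(\widehat\Lambda)\hat pSO(\widehat\Lambda)=\bigsqcup_m \hat p_m SO(\widehat\Lambda),\]
with the same representatives $\hat p_m\in SO(\widehat V)$ in both. This follows because the $\{\pm1\}$ factors are central and lie in both stabilisers, so $O(\widehat\Lambda)\hat pO(\widehat\Lambda)=\{\pm1\}\cdot SO(\widehat\Lambda)\hat pSO(\widehat\Lambda)$. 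Any coset representative can be multiplied by an element of $\{\pm1\}^{\text{places}}\subset O(\widehat\Lambda)$ to lie in $SO(\widehat V)$, and distinct $SO(\widehat\Lambda)$-cosets of $SO(\widehat V)$ remain distinct modulo $O(\widehat\Lambda)$, since $O(\widehat\Lambda)\cap SO(\widehat V)=SO(\widehat\Lambda)$. The Hecke sums then agree term by term under restriction.
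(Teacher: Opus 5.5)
Your proposal is correct and follows the same route as the paper. Restriction to $SO(\widehat{V})$ is injective and Hecke-equivariant, and it is surjective precisely because $-I$ acts trivially on $W_{2k}\otimes\theta_d$, since these representations are pulled back from $O(V)/\{\pm I\}\cong B^\times/\mathbb{Q}^\times$; your explicit matching of coset representatives for $T_{\hat p}$ fills in the equivariance the paper asserts without detail, and your one slip is harmless: the decomposition should be read as $O(\widehat{V}) = SO(\widehat{V})\cdot\{\pm I\}^{\text{places}}$ with independent signs at each place (as you in fact use later), not the diagonal $\{\pm I\}$, and it is genuinely unique because $\det(-I)=-1$ in dimension $3$.
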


\begin{proof}
The restriction map from $O(\widehat{V})$ to $SO(\widehat{V})$ induces a Hecke-equivariant injective homomorphism \[\mathcal{M}(O(V),O(\widehat{\Lambda}),W) \longrightarrow M(SO(V),SO(\widehat{\Lambda}),W)\] for any representation $W$ of $O(V)$. The image consists of functions \[\varphi\in M(SO(V),SO(\widehat{\Lambda}),W)\] such that $\varphi(g) = \varphi((-I)\cdot g) = (-I)\cdot \varphi(g)$ for every $g\in SO(\widehat{V})$. When $-I$ acts trivially on $W$ this condition is automatically satisfied and so we get a Hecke-equivariant isomorphism. All of the representations constructed above have this property, establishing the result. 
\end{proof}

Before moving on to the main result of Hein-Tornaria-Voight we need to recall one more definition. Let $\mathcal{O}$ be an order in a quaternion algebra $B$ over $\mathbb{Q}$ and $p \mid \mathrm{discrd}(\mathcal{O})$ be prime. Then the Eichler invariant $\varepsilon_p(\mathcal{O})$ takes values in $\{-1,0,+1\}$ and is defined by (where $\text{rad}(\mathcal{O}_p)$ is the Jacobson radical of the local order $\mathcal{O}_p = \mathcal{O}\otimes_{\mathbb{Z}}\mathbb{Z}_p$):
$$\varepsilon_p(\mathcal{O}) = \begin{cases}
 +1, \quad &\text{if }\mathcal{O}_p/\text{rad}(\mathcal{O}_p) \cong \mathbb{F}_p \times \mathbb{F}_p \text{ \quad ($\mathcal{O}$ is \textit{residually split} at } p),\\
 0, \quad &\text{if }\mathcal{O}_p/\text{rad}(\mathcal{O}_p) \cong \mathbb{F}_p \quad (\text{$\mathcal{O}$ is \textit{residually ramified} at $p$),}\\
 -1, \quad &\text{if }\mathcal{O}_p/\text{rad}(\mathcal{O}_p) \cong \mathbb{F}_{p^2} \quad (\text{ $\mathcal{O}$ is \textit{residually inert} at }p).
\end{cases}$$
 The orders featuring in the main results of \cite{heintornariavoight} are residually unramified everywhere, i.e.\ having local Eichler invariant $\varepsilon_p(\mathcal{O}) \in \{\pm 1\}$ for all primes $p \mid \mathrm{discrd}(\mathcal{O})$. In particular, Eichler orders are examples of such orders, since if $\mathrm{discrd}(\mathcal{O}) = DM$ with $D = \mathrm{disc}(B)$ and $M$ coprime to $D$ then it can be shown that \[\varepsilon_p(\mathcal{O}) = \begin{cases}
    -1, &\text{if } p \mid D, \\
     1, &\text{if } p \mid M
\end{cases}.\] A complete classification of local orders at odd primes via their Eichler invariants is possible, and is given in Section $24.6$ of \cite{voight2021quaternion}. The analogous classification for $p=2$ is given in \cite{lemurell}.

The following main result is then a consequence of the above lemma, along with Theorem 4.10, Theorem 4.12 and Corollary 6.12 in \cite{heintornariavoight}.

\begin{thm}\label{thm: 6.12 in HTV}
Suppose that $\Lambda\subset V$ is an even lattice in a positive definite ternary quadratic space $(V,Q)$ over $\mathbb{Q}$. Suppose that the associated quaternion algebra $B = \textup{Cl}^0(V)$ has discriminant $D$ and that the order $\mathcal{O} = \textup{Cl}^0(\Lambda)$ is residually unramified everywhere of discriminant $N = \textup{discrd}(\mathcal{O}) = \frac{1}{2}\textup{disc}(\Lambda)$. Let $N^+$ and $N^-$ be the products of the prime powers $p^e||N$ that satisfy $\varepsilon_p(\mathcal{O}) = 1$ and $\varepsilon_p(\mathcal{O}) = -1$ respectively. 

Then for each $k \geq 0$ and each square-free $d\mid N$ we have a Hecke-equivariant isomorphism (for all Hecke operators $T_p$ with $p \nmid N$):
$$\mathcal{S}(O(V), O(\widehat{\Lambda}), W_{2k}\otimes \theta_d) \cong 
\bigoplus_{M|N^-} S_{2k+2}^{M-\textup{new}}(\Gamma_0(MN^{+}), \varepsilon_D^{(MN^{+})}\varepsilon_d^{(MN^{+})})$$
where the sum runs over $M|N^-$ with $v_p(M) \equiv v_p(N) \bmod 2$ for all primes $p \,|\, N^-$.
\end{thm}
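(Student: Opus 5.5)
The plan is to deduce the statement from the results of Hein--Tornaria--Voight by passing from the orthogonal group to the special orthogonal group and then to the quaternionic side. First I apply Lemma \ref{OtoSO}. Since $-I$ acts trivially on $W_{2k}\otimes\theta_d$, it identifies $\mathcal{M}(O(V),O(\widehat{\Lambda}),W_{2k}\otimes\theta_d)$ Hecke-equivariantly with $M(SO(V),SO(\widehat{\Lambda}),W_{2k}\otimes\theta_d)$. I also check that this identification preserves the cuspidal subspaces. When $k>0$ or $d>1$ the weight is non-trivial, so the whole space is cuspidal. When $k=0$ and $d=1$ one only has to remove the constant function $\mathbbm{1}$ on both sides, and restriction sends $\mathbbm{1}$ to $\mathbbm{1}$.

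Next I transport the problem to $B=\mathrm{Cl}^0(V)$ using the exceptional isomorphism $SO(V)\cong B^\times/\mathbb{Q}^\times$ and its local versions, taken with the functoriality of the even Clifford correspondence from Subsection \ref{Ternary Lattices and Quaternion orders}. Locally, $SO(\Lambda_p)$ is identified with $N_{B_p^\times}(\mathcal{O}_p)/\mathbb{Q}_p^\times$, the normaliser of $\mathcal{O}_p$ modulo scalars. Hence $SO(\widehat{\Lambda})$ corresponds to $\widehat{N}(\widehat{\mathcal{O}})/\widehat{\mathbb{Q}}^\times$. Under this dictionary the $p$-neighbour operator $T_{\hat p}$, for $p\nmid N$, becomes the quaternionic Hecke operator attached to $\begin{pmatrix}p&0\\0&1\end{pmatrix}$ in $\mathrm{GL}_2(\mathbb{Q}_p)\cong B_p^\times$. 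The weight $W_{2k}$ goes to the usual $\mathrm{Sym}^{2k}\otimes\det^{-k}$, and $\theta_d$ goes to $\phi_d\circ\mathrm{nrd}$. This is exactly the content of HTV Theorem 4.10 and Theorem 4.12, which I will cite rather than reprove. The outcome is that our space becomes a space of quaternionic modular forms for $\widehat{\mathcal{O}}^\times$ that are invariant, up to the character $\theta_d$, under the normaliser.

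Finally I invoke HTV Corollary 6.12, the Jacquet--Langlands step with explicit control of level and signs, for residually unramified orders. Forms for $\widehat{\mathcal{O}}^\times$ correspond to classical forms of level $MN^+$, where $M$ runs over divisors of $N^-$ with $v_p(M)\equiv v_p(N)\bmod 2$ and the forms are $M$-new. This parity condition comes from the local structure of residually inert orders of level $p^e$. Imposing invariance under the normaliser, twisted by $\theta_d$, fixes the Atkin--Lehner eigenvalues. At $p\mid D$ the local Jacquet--Langlands correspondence introduces a sign flip, which accounts for the factor $\varepsilon_D$. The character contributes $\varepsilon_d$, so the prescribed sign vector is $\varepsilon_D^{(MN^+)}\varepsilon_d^{(MN^+)}$.

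I expect the main obstacle to be this bookkeeping of Atkin--Lehner signs. One has to match normaliser generators at each $p\mid N$ with $W_{p^e}$, and track the sign flip at the ramified primes of $B$ together with the effect of $\theta_d$. For this I would follow HTV's local analysis directly and only verify that our normalisations of $\theta_d$ and $W_{2k}$ agree with theirs.
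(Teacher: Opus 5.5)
Your proposal is correct and is essentially the paper's own argument, which likewise just applies Lemma \ref{OtoSO} and then cites Theorems 4.10, 4.12 and Corollary 6.12 of Hein--Tornaria--Voight; your remarks on normalisers and Atkin--Lehner bookkeeping describe what those cited results contain rather than adding steps. One caution about your cuspidality check: if $k=0$ and $d>1$ is divisible only by primes $p$ with $v_p(N)$ even, then the normaliser of each such residually unramified $\mathcal{O}_p$ has only reduced norms of even $p$-valuation, so $g\mapsto\prod_{p\mid d}(-1)^{v_p(\mathrm{nrd}(g_p))}$ (computed on a lift to $\widehat{B}^{\times}$) is a nonzero element of $\mathcal{M}(O(V),O(\widehat{\Lambda}),\theta_d)$ with Hecke eigenvalues $\ell+1$; thus ``non-trivial weight means the whole space is cuspidal'' is only the paper's convention, and this Eisenstein line has to be discarded for the isomorphism with cusp forms to hold.
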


Note that $D = \mathrm{disc}(B)$ is known to be square-free and to divide $N^-$ and so it makes sense to construct the sign vector $\varepsilon_D^{(N)}$.

The aim of this paper is to generalise this result to the cases of even $\Lambda$ of half-discriminant $N^2$ with $N$ odd and square-free. Such lattices cannot be handled using the above results because the underlying orders are residually ramified at least one prime. Our initial strategy for proving Hecke-equivariant isomorphisms between the spaces $\mathcal{S}(O(V),O(\widehat{\Lambda}),W_{2k}\otimes\theta_d)$ and spaces of classical modular forms will be to shift focus to various sublattices and superlattices $\Pi\subset V$ satisfying $O(\widehat{\Lambda}) = O(\widehat{\Pi})$. On the level of Hecke modules, doing this makes no change.

\begin{lem}\label{samespace}
Suppose $\Lambda,\Pi\subset V$ are lattices such that $O(\widehat{\Lambda}) = O(\widehat{\Pi})$. Then for each representation $W$ of $O(V)$ there is an equality of Hecke modules: \[\mathcal{M}(O(V),O(\widehat{\Lambda}),W) = \mathcal{M}(O(V),O(\widehat{\Pi}),W).\]
\end{lem}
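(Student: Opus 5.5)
The plan is to observe that the space $\mathcal{M}(O(V),O(\widehat{\Lambda}),W)$ and all of its Hecke operators depend only on the pair of groups $O(V)\subset O(\widehat{V})$ and the open compact subgroup $O(\widehat{\Lambda})$. The lattice itself enters only through this subgroup.

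The first step is to unpack the definition. A function $\varphi: O(\widehat{V})\to W$ lies in $\mathcal{M}(O(V),O(\widehat{\Lambda}),W)$ exactly when $\varphi(\gamma g k)=\gamma\varphi(g)$ for all $\gamma\in O(V)$ and $k\in O(\widehat{\Lambda})$. There is one preliminary point to settle. The adelic group $O(\widehat{V})$ was defined as a restricted product with respect to the $O(\Lambda_p)$, so a priori it could depend on the lattice. However, any two lattices in $V$ agree at all but finitely many primes, so $\Lambda_p=\Pi_p$ for almost all $p$. Hence the restricted products defined using $\Lambda$ and using $\Pi$ coincide as subgroups of $\prod_p O(V_p)$. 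Under the hypothesis $O(\widehat{\Lambda})=O(\widehat{\Pi})$, the defining conditions for the two spaces are then literally identical, and the two spaces are equal as sets of functions and as vector spaces.

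The second step is to check that the Hecke operators agree. For $\hat g\in O(\widehat{V})$, the operator $T_{\hat g}$ is defined from the double coset $O(\widehat{\Lambda})\hat g O(\widehat{\Lambda})$ and its decomposition into left cosets $\hat g_i O(\widehat{\Lambda})$. Replacing $O(\widehat{\Lambda})$ by the equal group $O(\widehat{\Pi})$ changes neither the double coset nor the coset representatives. Therefore $T_{\hat g}(\varphi)(g)=\sum_i\varphi(g\hat g_i)$ is the same operator on both sides.

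It remains to address the operators $T_p$, defined via the elements $\hat p$. For $p\nmid 2\,\mathrm{disc}(\Lambda)\,\mathrm{disc}(\Pi)$, we choose $\hat p$ using a hyperbolic splitting of $V_p$. Since $\Lambda_p=\Pi_p$ for all but finitely many $p$, the same element $\hat p$ serves for both lattices at such primes. Hence the operators $T_p$ on the two sides coincide for all but finitely many $p$, which is the sense in which the Hecke modules are equal. At primes where $\Lambda_p\neq\Pi_p$, we simply take the Hecke operator attached to the same $\hat g$ on both sides, and equality still holds by the second step.

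The main (minor) obstacle is this bookkeeping about which $\hat p$ is attached to which lattice. Beyond that, the lemma is a tautology once one checks that $O(\widehat{V})$ is independent of the choice of lattice in $V$.
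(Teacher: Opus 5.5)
Your proposal is correct and follows the same route as the paper: equality of the level groups makes the defining conditions, and hence the spaces, identical, and the equality $O(\widehat{\Lambda})\,g\,O(\widehat{\Lambda}) = O(\widehat{\Pi})\,g\,O(\widehat{\Pi})$ of double cosets gives the same coset representatives and hence the same Hecke operators. Your extra remarks are details the paper leaves implicit: that $O(\widehat{V})$ does not depend on the choice of lattice, and that the same $\hat{p}$ serves at the (all but finitely many) primes where $\Lambda_p = \Pi_p$.
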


\begin{proof}
Since $O(\widehat{\Lambda}) = O(\widehat{\Pi})$ is follows that $\mathcal{M}(O(V),O(\widehat{\Lambda}),W) = \mathcal{M}(O(V),O(\widehat{\Pi}),W)$ as spaces. We also have that: \[O(\widehat{\Lambda})\,g\, O(\widehat{\Lambda}) = O(\widehat{\Pi})\,g\, O(\widehat{\Pi})\] for any $g\in O(\widehat{V})$. This implies that we get the same Hecke representatives and hence the same Hecke operators on both sides.
\end{proof}

We will often find ourselves in a position where the related lattice $\Pi$ can be viewed as a lattice of smaller discriminant upon scaling of the underlying quadratic form. Once again, this provides an equivalent Hecke module.

\begin{lem}\label{scaling}
Let $\Lambda\subset V$ be a lattice in a quadratic space $(V,Q)$ over $\mathbb{Q}$. The Hecke module $\mathcal{M}(O(V),O(\widehat{\Lambda}),W)$ is invariant under non-zero scaling of the underlying quadratic map $Q$.
\end{lem}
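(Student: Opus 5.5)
The idea is to show that scaling changes none of the data defining the Hecke module: not the orthogonal groups, not the adelic groups, not the level subgroup, not the double cosets. Fix $\lambda\in\mathbb{Q}^{\times}$ and consider the scaled space $(V,Q^{(\lambda)})$, with bilinear form $T^{(\lambda)}=\lambda T$.

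First I check that the global orthogonal group is unchanged. For $\phi\in\mathrm{GL}(V)$ we have $Q^{(\lambda)}(\phi v)=Q^{(\lambda)}(v)$ for all $v$ if and only if $\lambda Q(\phi v)=\lambda Q(v)$ for all $v$. Since $\lambda\neq 0$, this holds if and only if $Q(\phi v)=Q(v)$, so $O(V,Q^{(\lambda)})=O(V,Q)$ as subgroups of $\mathrm{GL}(V)$. The same argument over each completion gives $O(V_p,Q^{(\lambda)})=O(V_p,Q)$ for every $p\le\infty$. The weight $W$ is a representation of this common group, so it is unchanged.

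Next I treat the local lattice stabilisers. The lattice $\Lambda$ is the same subset of $V$, and $\Lambda_p=\Lambda\otimes\mathbb{Z}_p$ is the same subset of $V_p$. The stabiliser $O(\Lambda_p)=\{g\in O(V_p)\mid g\Lambda_p=\Lambda_p\}$ depends only on the group $O(V_p)$ and the set $\Lambda_p$, so it too is unchanged. Consequently the restricted product $O(\widehat{V})$, which is defined with respect to the subgroups $O(\Lambda_p)$, is the same group, and so is $O(\widehat{\Lambda})=\prod_p O(\Lambda_p)$. It follows that the defining conditions on $\varphi:O(\widehat{V})\to W$ are literally identical, and the two spaces $\mathcal{M}$ coincide.

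For each $\hat g\in O(\widehat{V})$ the double coset $O(\widehat{\Lambda})\hat gO(\widehat{\Lambda})$ and its decomposition into left cosets are the same. Hence every Hecke operator $T_{\hat g}$ is the same operator, exactly as in the proof of Lemma \ref{samespace}.

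The only subtle point, and the step I expect to need the most care, is the specific operators $T_p=T_{\hat p}$. The element $\hat p$ was built from a hyperbolic splitting $V_p=\mathbb{H}\perp\mathbb{H}^{\perp}$ for $Q$. Under $Q^{(\lambda)}$ the plane $\mathbb{H}$ has Gram matrix $\lambda\left(\begin{smallmatrix}0&1\\1&0\end{smallmatrix}\right)$. Rescaling one of the two isotropic basis vectors by $\lambda^{-1}$ returns it to the standard hyperbolic form, and the diagonal matrix $\mathrm{diag}(p,1/p,1,\dots,1)$ is unchanged by this rescaling. So the same element $\hat p$ serves for both quadratic maps, and the operators $T_p$ agree. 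The level $O(\widehat{\Lambda})$ is unchanged and, for $p$ coprime to $2\,\mathrm{disc}$ and to $\lambda$, the $p$-neighbour description gives the same $N(p)$ cosets. The cuspidal subspace, being defined through the Hecke eigenforms together with $\mathbbm{1}$, is therefore also preserved.
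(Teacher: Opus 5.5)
Your proposal is correct and follows the same route as the paper. Scaling by $\lambda\neq 0$ leaves $O(V)$, each $O(V_p)$, the stabilisers $O(\Lambda_p)$, and hence $O(\widehat{V})$ and $O(\widehat{\Lambda})$ unchanged, so the spaces and all double-coset Hecke operators coincide, as in Lemma~\ref{samespace}. Your extra paragraph on $\hat p$ is a harmless additional check that the paper skips, since $T_{\hat g}$ depends only on the element $\hat g$ of the unchanged group $O(\widehat{V})$ and its double coset.
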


\begin{proof}
This is also clear. For any $\alpha\in\mathbb{Q}^{\times}$ the scaled quadratic space $(V,Q^{(\alpha)})$ has the same orthogonal group as $(V,Q)$. The same is true for $(\widehat{V},Q^{(\alpha)})$ and $(\widehat{V},Q)$. Consequently, the subgroup $O(\widehat{\Lambda})$ is invariant under scaling of the underlying quadratic map, which by Lemma \ref{samespace} implies an equality of spaces of orthogonal modular forms and Hecke operators.
\end{proof}

\section{Some local lattices, linear maps and quaternion orders}

As mentioned above, we will be interested in even ternary lattices of discriminant $2N^2$ with $N$ odd and square-free. It will be handy to classify the corresponding local lattices of such discriminants (which must be integral by the global even assumption).

\begin{lem}\label{localisom}
    
Let $\Lambda_p$ be a ternary integral $\mathbb{Z}_p$-lattice of discriminant $\mathrm{disc}(\Lambda_p) = [2N^2]$ in a quadratic space $(V_p,Q)$ over $\mathbb{Q}_p$. If $p = 2$ then assume that $\Lambda_p$ is also even. Then we must have:

\[\Lambda_p\sim \begin{cases} \langle 1,1,2\rangle & \text{ if } p\nmid 2N\\ \left\langle \begin{pmatrix}0 & 1\\ 1 & 0\end{pmatrix},-2\right\rangle & \text{ if } p = 2\\  \langle 1,p,p\rangle,  \langle r,p,pr\rangle,  \langle 1,1,p^2\rangle,  \langle 1,r,p^2r\rangle & \text{ if } p | N \text{ and } p\equiv 1,7\bmod 8\\  \langle r,p,p\rangle,  \langle 1,p,pr\rangle,  \langle 1,r,p^2\rangle,  \langle 1,1,p^2r\rangle & \text{ if } p \,|\, N \text{ and } p\equiv 3,5\bmod 8\end{cases}.\] Further, we have that $S(V_p) = 1$ unless $p=2$ or: \[\Lambda_p \sim \begin{cases} \langle r,p,pr\rangle  \text{ and } p = 1 \bmod 8\\ \langle r,p,p\rangle  \text{ and } p = 3 \bmod 8\\ \langle 1,p,pr\rangle  \text{ and } p = 5 \bmod 8\\ \langle 1,p,p\rangle  \text{ and } p = 7 \bmod 8\end{cases}\]
\end{lem}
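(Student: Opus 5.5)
The plan is to use the Jordan decomposition prime by prime and then compute Hasse invariants from the diagonal forms. Since $\Lambda_p$ is integral with discriminant $[2N^2]$, the valuation $v_p(\det)$ is $0$ if $p\nmid 2N$, is $1$ if $p=2$, and is $2$ if $p\mid N$.

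\textbf{The case $p\nmid 2N$.} Here $\Lambda_p$ is unimodular. For odd $p$ it is $\langle 1,1,\delta\rangle$ with $\delta\in\{1,r\}$, and $\delta$ is fixed by the discriminant class $[2]$. This gives $\langle 1,1,2\rangle$, since $2$ is a unit whose square class is the only one of the right discriminant. Its Hasse invariant is a product of Hilbert symbols of units at an odd prime, so $S(V_p)=1$.

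\textbf{The case $p=2$.} Since $\Lambda_2$ is even, every Type I Jordan constituent has scale at least $2$. A unimodular component must therefore be Type II, so it is $\mathbb{H}$ or $\left\langle\begin{pmatrix}2&1\\1&2\end{pmatrix}\right\rangle$, of determinant $-1$ or $3$. Rank $3$ and $v_2(\det)=1$ force the decomposition to be (even unimodular rank $2$) $\perp \langle 2u\rangle$ with $u\in\{\pm1,\pm5\}$. The determinant must equal $2N^2$ up to squares, and $N^2\equiv 1 \bmod 8$, so the determinant is $2$ modulo $\mathbb{Z}_2^{\times2}$. This gives $\mathbb{H}\perp\langle -2\rangle$ or $\left\langle\begin{pmatrix}2&1\\1&2\end{pmatrix}\right\rangle\perp\langle 2u\rangle$ with $3u\equiv 1$ up to squares, i.e.\ $u=3\equiv -5$.

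I then need the standard identity $\left\langle\begin{pmatrix}2&1\\1&2\end{pmatrix}\right\rangle\perp\langle 2u\rangle\cong\mathbb{H}\perp\langle 2u'\rangle$. This holds because the binary component times an odd-norm $2$-modular line allows a Type II exchange (sign walking/oddity fusion in the Conway--Sloane symbol). After the exchange only $\mathbb{H}\perp\langle -2\rangle$ remains. I expect this $2$-adic step to be the main obstacle. I would verify it via the Conway--Sloane symbols $1^{\pm2}_{II}2^{\pm1}_{t}$, using the rule that flipping signs on two adjacent constituents, with oddity change $4$, gives an equivalent symbol. The check is that the only symbol of determinant $2$ up to squares is $1^{-2}_{II}2^{-1}_{-2\ldots}$, equivalently $1^{2}_{II}2^{1}_{7}$, up to this walking.

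\textbf{The case $p\mid N$, $p$ odd.} Here $v_p(\det)=2$, so the Jordan type is $1^1\perp p^2$ (invariants $(1,0),(2,1)$) or $1^2\perp (p^2)^1$. Writing each as a diagonal form $\langle a,pb,pc\rangle$ or $\langle a,b,p^2c\rangle$ with units in $\{1,r\}$ normalized as in \S2, the determinant condition is $abc\equiv 2$ modulo unit squares. Whether $2$ is a square mod $p$ depends on $p\equiv\pm1$ versus $p\equiv\pm3\bmod 8$. In each case this leaves exactly four classes, one for each choice of the $\delta$'s subject to the constraint, and these are the four listed.

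\textbf{Hasse invariants.} For each listed form I compute $S(V_p)=\prod_{i<j}(a_i,a_j)_p$ using $(u,v)_p=1$ for units, $(u,p)_p=\left(\frac{u}{p}\right)$, and $(p,p)_p=\left(\frac{-1}{p}\right)$. The forms $\langle 1,1,p^2\rangle$ and $\langle 1,r,p^2\rangle$ are equivalent over $\mathbb{Q}_p$ to unit forms, so they have $S=1$. For $\langle a,pb,pc\rangle$ I get $S=\left(\frac{a}{p}\right)^2\cdot\left(\frac{b}{p}\right)\left(\frac{c}{p}\right)\cdot\left(\frac{-1}{p}\right)$ up to the factor $(pb,pc)_p=(p,p)_p(b,p)_p(p,c)_p$. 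I then evaluate this with $\left(\frac{-1}{p}\right)$ and $\left(\frac{r}{p}\right)=-1$ in each residue class mod $8$. This shows $S=-1$ exactly for the listed exceptions: $bc=r$ when $p\equiv1$; $b=c=1$, $a=r$ when $p\equiv 3$; and so on. This step is routine bookkeeping.
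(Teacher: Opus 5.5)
Your proposal is correct and follows the paper's proof. At odd $p \mid N$ it uses the two Jordan types plus the determinant constraint $[\delta_1\delta_2]=[2]$. At $p=2$ it reduces to $\mathbb{H}\perp\langle -2\rangle$ or $\left\langle\begin{pmatrix}2&1\\1&2\end{pmatrix}\right\rangle\perp\langle 6\rangle$, which are identified by sign walking between $1_{\mathrm{II}}^{+2}2_7^{+1}$ and $1_{\mathrm{II}}^{-2}2_3^{-1}$; the oddity in your symbol should be $3$.

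Your Hilbert-symbol computation goes beyond the paper. It gives $S(\langle a,pb,pc\rangle)=\left(\frac{-bc}{p}\right)$, and $S=1$ for all four forms $\langle 1,a,p^2b\rangle$. This supplies the verification of the Hasse-invariant clause, which the paper's proof does not address.
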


\begin{proof}
If $p\nmid 2N$ then $\Lambda_p$ is unimodular of discriminant $\mathrm{disc}(\Lambda_p) = [2N^2] = [2]$ and so $\Lambda_p\sim \langle 1,1,2\rangle$.

If $p = 2$ then since $v_2(2N^2) = 1$ we must have a unimodular Jordan component of rank $2$ and a $2$-modular Jordan component of rank $1$. Since $\Lambda_p$ is even and $\mathrm{disc}(\Lambda_p) = [2N^2] = [2]
$ (as $N$ is odd), the $2$-adic classification of lattices tells us that \[\Lambda_p \sim \left\langle\begin{pmatrix}0 & 1\\ 1 & 0\end{pmatrix}, -2\right\rangle \qquad \text{ or } \qquad \left\langle\begin{pmatrix}2 & 1\\ 1 & 2\end{pmatrix}, 6\right\rangle.\] However, these lattices are isometric, since the Conway-Sloane $2$-adic symbols are $1_{\text{II}}^{+2}2_7^{+1}$ and $1_{\text{II}}^{-2}2_3^{-1}$, which are related by the Sign Walking rule.

If $p \,|\, N$ then $v_p(2N^2) = 2$, since $N$ is odd and square-free. Hence $\Lambda_p$ either has a unimodular Jordan component of rank $1$ and a $p$-modular Jordan component of rank $2$, or a unimodular Jordan component of rank $2$ and a $p^2$-modular Jordan component of rank $1$, i.e.\ \[\Lambda_p\sim \langle a,p,pb\rangle \qquad \text{ or } \qquad \Lambda_p \sim \langle 1,a,p^2b\rangle\] for some $a,b\in\{1,r\}$. Since $\mathrm{disc}(\Lambda_p) = [2N^2] = [2p^2]$ we find that $[ab] = [2]$, so that $[a]=[b]$ when $p \equiv 1,7\bmod 8$ and $[a]=[rb]$ when $p \equiv 3,5\bmod 8$. This reduces the list to the four possibilities given above in each case.
\end{proof}

Suppose that $V_p$ is a three dimensional $\mathbb{Q}_p$-vector space and fix a basis $\{v_1,v_2,v_3\}$. We will later rely heavily on the following simple linear map (on coordinates under this basis): \[\psi_{(p)}(x_1,x_2,x_3) = (px_1,x_2,x_3).\]

\begin{prop}\label{psip}
Let $\Lambda_p$ be a ternary $\mathbb{Z}_p$-lattice in a quadratic space $(V_p,Q)$ over $\mathbb{Q}_p$ such that $\Lambda_p\sim\langle a,p,pb\rangle$ for some $a,b\in\{1,r\}$. Then $\Pi_p = \psi_{(p)}(\Lambda_p)$ is an index $p$ sublattice such that $O(\Lambda_p) = O(\Pi_p)$.
\end{prop}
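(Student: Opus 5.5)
We work in the basis $\{v_1,v_2,v_3\}$ in which $\Lambda_p = \mathbb{Z}_pv_1\oplus\mathbb{Z}_pv_2\oplus\mathbb{Z}_pv_3$ has diagonal Gram matrix $\mathrm{diag}(a,p,pb)$. Then $\Pi_p = \psi_{(p)}(\Lambda_p) = \mathbb{Z}_p(pv_1)\oplus\mathbb{Z}_pv_2\oplus\mathbb{Z}_pv_3$. This is visibly a sublattice of index $p$, since $\Lambda_p/\Pi_p\cong\mathbb{Z}_p/p\mathbb{Z}_p$. Its Gram matrix in the basis $\{pv_1,v_2,v_3\}$ is $\mathrm{diag}(p^2a,p,pb) = p\cdot\mathrm{diag}(pa,1,b)$.

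The key idea is to describe $\Pi_p$ intrinsically in terms of $\Lambda_p$, so that every isometry of $\Lambda_p$ automatically preserves $\Pi_p$, and conversely. For the first inclusion I would use
\[\Pi_p = \{x\in\Lambda_p \,:\, T(x,\Lambda_p)\subseteq p\mathbb{Z}_p\} = \Lambda_p\cap p\Lambda_p^{\ast}.\]
This is checked directly: for $x = x_1v_1+x_2v_2+x_3v_3$ we have $T(x,v_1) = ax_1$, while $T(x,v_2) = px_2$ and $T(x,v_3)=pbx_3$ lie in $p\mathbb{Z}_p$ automatically. Since $a$ is a unit, the condition reduces to $p\mid x_1$. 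Any $g\in O(\Lambda_p)$ preserves $\Lambda_p$ and $T$, hence preserves this set, so $O(\Lambda_p)\subseteq O(\Pi_p)$.

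For the reverse inclusion I would recover $\Lambda_p$ from $\Pi_p$ in the same way. Rescaling $\Pi_p$ by $1/p$ gives a lattice $\langle pa,1,b\rangle$, which has the same shape as $\Lambda_p$ up to reordering of the Jordan components. The analogous construction then gives
\[\{y\in\Pi_p^{\ast}\,:\, \ldots\}\quad\text{or equivalently}\quad \Lambda_p = \Pi_p + p\,\Pi_p^{\ast}\cap\tfrac{1}{p}\ldots .\]
Concretely, $\Pi_p^{\ast}$ has basis $\{v_1/(pa),\,v_2/p,\,v_3/(pb)\}$, so $p\Pi_p^{\ast} = \mathbb{Z}_pv_1/a+\mathbb{Z}_pv_2+\mathbb{Z}_pv_3 = \Lambda_p$, because $a$ and $b$ are units. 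Thus $\Lambda_p = p\Pi_p^{\ast}$ is intrinsic to $\Pi_p$. Any $g\in O(\Pi_p)$ preserves $\Pi_p^{\ast}$ and hence $p\Pi_p^{\ast}=\Lambda_p$, giving $O(\Pi_p)\subseteq O(\Lambda_p)$.

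The only potential obstacle is making sure the characterisations are basis-free, since the orthogonal groups act on $V_p$ rather than on coordinates. This is handled by the fact that duality and the set $\Lambda_p\cap p\Lambda_p^{\ast}$ are defined purely in terms of $T$. With these two intrinsic descriptions the argument is complete; it is essentially a two-line computation once the identity $\Lambda_p = p\Pi_p^{\ast}$ is noticed.
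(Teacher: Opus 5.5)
Your proof is correct, and it takes a different and more self-contained route than the paper.

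The paper proves only the inclusion $O(\Lambda_p)\subseteq O(\Pi_p)$ directly. It lists the $p^2+p+1$ index-$p$ sublattices of $\Lambda_p$ and observes that $\Pi_p$ is the only one with norm ideal $p\mathbb{Z}_p$. It then gets equality by citing Allcock's theorem that the orthogonal group of an elementary lattice is maximal among stabilisers of $p$-adic lattices.

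You instead identify $\Pi_p$ intrinsically. Your computation in fact gives
\[
p\Lambda_p^{\ast}=\mathbb{Z}_p(pv_1)\oplus\mathbb{Z}_pv_2\oplus\mathbb{Z}_pv_3=\Pi_p
\]
exactly. Intersecting with $\Lambda_p$ is unnecessary, because $\Lambda_p$ is $p$-elementary and so $p\Lambda_p^{\ast}\subseteq\Lambda_p$. Your second identity $\Lambda_p=p\Pi_p^{\ast}$ then follows from $(\alpha L)^{\ast}=\alpha^{-1}L^{\ast}$ and $\Lambda_p^{\ast\ast}=\Lambda_p$. Since $g(L^{\ast})=(gL)^{\ast}$ for every $g\in O(V_p)$, both inclusions collapse into the single observation $O(\Lambda_p)=O(\Lambda_p^{\ast})=O(p\Lambda_p^{\ast})$.

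This avoids both the enumeration of sublattices and the external maximality theorem. It also explains conceptually why applying $\hat{\psi}_{(p)}$ and rescaling by $1/p$ preserves the adelic orthogonal group: it is a Watson-type dualise-and-rescale move. The paper's argument leans on heavier machinery than this statement requires.

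Two cosmetic points. First, delete the garbled display with ellipses (the one beginning $\{y\in\Pi_p^{\ast}\,:\,\ldots\}$); it is not a meaningful statement, and your argument does not use it. Second, the remark that $p^{-1}\Pi_p\sim\langle pa,1,b\rangle$ has the same shape as $\Lambda_p$ is inaccurate. The ranks of the unimodular and $p$-modular Jordan components are interchanged: the rescaled lattice is of the type $\langle 1,a,pb\rangle$ from Lemma \ref{ram}, not $\langle a,p,pb\rangle$. Nothing in your proof depends on that remark.
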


\begin{proof}
It is clear that $\Pi_p$ is an index $p$ sublattice of $\Lambda_p$ since $\text{det}(\psi_{(p)}) = p$. Note that since $\Lambda_p$ is an elementary lattice (i.e.\ $\Lambda_p^*/\Lambda_p$ is an elementary $p$-group), Theorem $1.1(1)$ of Allcock \cite{allcock2025} tells us that $O(\Lambda_p)$ is maximal in $O(V_p)$ among the orthogonal groups of all $p$-adic lattices. Hence, if we can show that $O(\Lambda_p)\subseteq O(\Pi_p)$ then we will be done.

By standard arguments, there are exactly $p^2+p+1$ sublattices of $\Lambda_p$ of index $p$ up to change of basis. If $\Lambda_p$ has basis $\{v_1,v_2,v_3\}$ with respect to the above Gram matrix then these sublattices have bases \[\{pv_1,v_2,v_3\} \qquad \{v_1+\alpha v_2, pv_2, v_3\} \qquad \{v_1+\alpha v_3, v_2 + \beta v_3, pv_3\},\] with $\alpha,\beta\in \{0,1,...,p-1\}$, and corresponding Gram matrices: \[\begin{pmatrix}
p^2a & 0 & 0\\ 0 & p & 0\\ 0 & 0 & pb
\end{pmatrix}\qquad \begin{pmatrix}
a+p\alpha^2 & p^2\alpha & 0\\ p^2\alpha & p^3 & 0\\ 0 & 0 & pb
\end{pmatrix} \qquad \begin{pmatrix}
a+pb\alpha^2 & pb\alpha\beta & p^2b
\alpha\\ pb\alpha\beta & p(1+b\beta^2) & p^2b\beta\\ p^2b\alpha & p^2b\beta & p^3b
\end{pmatrix}.\] Note that the first Gram matrix corresponds to the lattice $\Pi_p$, and is visibly the only one having norm ideal $p\mathbb{Z}_p$. Now if $\gamma\in O(\Lambda_p)$ then $\gamma\Lambda_p = \Lambda_p$ and $\gamma\Pi_p$ is a sublattice of $\Lambda_p$ of index $p$. However, isometries preserve norm ideals and so we must have that $\gamma\Pi_p = \Pi_p$, i.e.\ $\gamma\in O(\Pi_p)$. This gives $O(\Lambda_p)\subseteq O(\Pi_p)$, as required.
\end{proof}

Suppose that $(V_q)_q$ is a collection of three dimensional $\mathbb{Q}_q$-vector spaces, one for each prime $q$. For a fixed prime $p$ and basis $\{v_1,v_2,v_3\}$ for $V_p$ we define the map $\hat{\psi}_{(p)}: \prod_p V_p \rightarrow \prod_p V_p$ which applies the map $\psi_{(p)}$ in component $p$ and the identity map in all other components. 

We will later wish to apply the map $\hat{\psi}_{(p)}$ to a $\mathbb{Z}$-lattice $\Lambda \subset V$ with local lattices $\Lambda_q$ having specified Gram matrices. We define $\hat{\psi}_{(p)}(\Lambda) =\hat{\psi}_{(p)}(\prod_q \Lambda_q)\subset \prod_q V_q$ (under local bases corresponding to the given Gram matrices).

Finally, we will need the following result about local quaternion algebras and orders coming from certain local ternary lattices.

\begin{lem}\label{ram}
Let $\Lambda_p$ be a ternary lattice in a quadratic space $(V_p,Q)$ over $\mathbb{Q}_p$, let $B_p = \mathrm{Cl}^0(V_p)$ be the associated quaternion algebra and $\mathcal{O}_p = \mathrm{Cl}^0(\Lambda_p)$ be the associated local order.

\vspace{0.1in}
\begin{itemize}
\item{If $p$ is odd and $\Lambda_p \sim \langle 1,1,a \rangle$ for some $a\in\{1,r\}$, then $B_p\cong M_2(\mathbb{Q}_p)$ is unramified and $\mathcal{O}_p\cong M_2(\mathbb{Z}_p)$ is maximal of discriminant $1$ (hence unramified).}
\vspace{0.1in}
\newpage
\item{If $p$ is odd and $\Lambda_p \sim \langle 1,a,pb \rangle$ for some $a,b\in\{1,r\}$, then $B_p$ is ramified if and only if $S(V_p) = -1$ and $p\equiv 1 \bmod 4$ or $S(V_p) = 1$ and $p \equiv 3 \bmod 4$. In the ramified case, the order $\mathcal{O}_p$ is the unique maximal order of discriminant $p$ (unramified with $\varepsilon_p(\mathcal{O}) = -1$), otherwise $\mathcal{O}_p\cong\begin{pmatrix}\mathbb{Z}_p & \mathbb{Z}_p\\ p\mathbb{Z}_p & \mathbb{Z}_p\end{pmatrix}$ is an Eichler order of discriminant $p$ (unramified with  $\varepsilon_p(\mathcal{O}) = 1$).}
\vspace{0.1in}
\item{If $p$ is odd and $\Lambda_p \sim \langle 1,a,p^2b\rangle$ for some $a,b\in\{1,r\}$, then $B_p$ is unramified. We have that $\mathcal{O}_p\cong\begin{pmatrix}\mathbb{Z}_p & \mathbb{Z}_p\\ p^2\mathbb{Z}_p & \mathbb{Z}_p\end{pmatrix}$ is an Eichler order of discriminant $p^2$ when $p\equiv 1 \bmod 4$ and $a = 1$ or $p\equiv 3\bmod 4$ and $a= r$ (hence unramified with $\varepsilon_p(\mathcal{O}) = 1$) and is a non-Eichler order of discriminant $p^2$ otherwise (unramified with $\varepsilon_p(\mathcal{O}) = -1$).}
\vspace{0.1in}
\item{If $p=2$ and $\Lambda_p\sim\left\langle\begin{pmatrix}0 & 1\\ 1 & 0\end{pmatrix},2a\right\rangle$ for some $a\in\{1,3,5,7\}$, then $B_p\cong M_2(\mathbb{Q}_2)$ is unramified and $\mathcal{O}_p\cong M_2(\mathbb{Z}_2)$ is maximal (unramified with $\varepsilon_p(\mathcal{O}) = 1$).}
\end{itemize}
\end{lem}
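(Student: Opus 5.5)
The plan is to compute everything explicitly using the recipe of Subsection \ref{Ternary Lattices and Quaternion orders}. For a diagonal lattice $\langle \alpha,\beta,\gamma\rangle$ (Gram matrix with $2a=\alpha$, $2b=\beta$, $2c=\gamma$, $u=v=w=0$), the even Clifford algebra is $B_p\cong\left(\frac{-\alpha\beta,\,-\beta\gamma}{\mathbb{Q}_p}\right)$. Equivalently, $i=e_1e_2$, $j=e_2e_3$ satisfy $i^2=-Q(e_1)Q(e_2)$ and $j^2=-Q(e_2)Q(e_3)$, up to the harmless factors of $2$ coming from $Q=\tfrac12 T$ at odd $p$. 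The order $\mathcal{O}_p$ is $\mathbb{Z}_p\oplus\mathbb{Z}_p e_1e_2\oplus\mathbb{Z}_pe_2e_3\oplus\mathbb{Z}_pe_1e_3$ with $\mathrm{discrd}(\mathcal{O}_p)=\tfrac12\mathrm{disc}(\Lambda_p)$. In each case I will decide ramification of $B_p$ with a Hilbert symbol computation, and identify $\mathcal{O}_p$ by its discriminant together with its Eichler invariant. The Eichler invariant is read off from the reduction $\mathcal{O}_p/\mathrm{rad}(\mathcal{O}_p)$, using the classification of \cite[\S24.6]{voight2021quaternion}.

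\textbf{Unimodular and $2$-adic cases.} For $\langle 1,1,a\rangle$ with $p$ odd, the discriminant of $\mathcal{O}_p$ is a unit. A discriminant-$1$ order is maximal and $B_p$ is split, because all Hilbert symbols of units are trivial at odd $p$. Hence $\mathcal{O}_p\cong M_2(\mathbb{Z}_p)$. For $p=2$ and $\langle H,2a\rangle$, the discriminant is $[2\cdot(-a)\cdot\ldots]$ with $\mathrm{discrd}=1$ up to units. So $\mathcal{O}_2$ is maximal of reduced discriminant $1$, which forces $B_2$ to be split. Concretely, the hyperbolic plane gives an isotropic vector, so $V_2$ is isotropic and $\mathrm{Cl}^0(V_2)$ is split. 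This yields $M_2(\mathbb{Z}_2)$.

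\textbf{Case $\langle 1,a,pb\rangle$.} Here $B_p\cong\left(\frac{-a,\,-apb}{\mathbb{Q}_p}\right)\cong\left(\frac{-a,\,pb}{\mathbb{Q}_p}\right)$, which is ramified if and only if $-a$ is a nonsquare. Separately, $S(V_p)=(a,pb)_p=\left(\frac{a}{p}\right)$. Thus $a=r$ exactly when $S(V_p)=-1$, and $-a$ is a nonsquare iff ($a=r$, $p\equiv1\bmod4$) or ($a=1$, $p\equiv3\bmod4$). This gives the stated criterion. The reduced discriminant is $p$. If $B_p$ is ramified, the unique maximal order has discriminant $p$, so $\mathcal{O}_p$ equals it, with $\varepsilon=-1$. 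If $B_p$ is split, an order of discriminant $p$ in $M_2(\mathbb{Q}_p)$ is Eichler of level $p$ (reduced-discriminant-$p$ orders are hereditary), so $\varepsilon=1$.

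\textbf{Case $\langle 1,a,p^2b\rangle$.} Rescaling $e_3$ by $p$ shows $V_p\cong\langle1,a,b\rangle$, so $B_p\cong\left(\frac{-a,-ab}{\mathbb{Q}_p}\right)$ has unit entries and is split. The order contains $\mathbb{Z}_p[e_1e_2]$, where $(e_1e_2)^2=-Q(e_1)Q(e_2)$ is a unit $-a/4$ up to squares. Its Eichler invariant is determined by the image of $\mathcal{O}_p$ modulo the radical. The other generators $e_2e_3$ and $e_1e_3$ have reduced norm divisible by $p^2$ and should lie in the radical. Then $\mathcal{O}_p/\mathrm{rad}\cong\mathbb{F}_p[x]/(x^2+a/4)$, which is $\mathbb{F}_p\times\mathbb{F}_p$ iff $-a$ is a square mod $p$, and $\mathbb{F}_{p^2}$ otherwise. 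That is, $\varepsilon=1$ iff ($a=1$, $p\equiv1\bmod4$) or ($a=r$, $p\equiv3\bmod4$). For $\varepsilon=1$ and discriminant $p^2$, the classification identifies $\mathcal{O}_p$ as the Eichler order of level $p^2$. For $\varepsilon=-1$, it is the non-Eichler order $\mathbb{Z}_{p^2}+p\,M_2(\mathbb{Z}_p)$-type order.

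The main obstacle I expect is verifying rigorously that the radical is $p$-adically spanned by $p$ and the two elements $e_2e_3,e_1e_3$ in the last case. I would check this directly: the two-sided ideal they generate with $p$ is topologically nilpotent, since products of these elements land in $p\mathcal{O}_p$ by the relations $e_3^2=Q(e_3)\in p^2\mathbb{Z}_p$. The quotient by this ideal is the two-dimensional algebra above, which is semisimple. Hence this ideal is exactly the radical.
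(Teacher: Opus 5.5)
Your proposal is correct. Its overall structure matches the paper's: decide ramification of $B_p$ with a Hilbert symbol, then identify $\mathcal{O}_p$ from $\mathrm{discrd}(\mathcal{O}_p)=\tfrac12\mathrm{disc}(\Lambda_p)$ together with the local classification of orders.

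For $\langle 1,1,a\rangle$, for $p=2$, and for $\langle 1,a,pb\rangle$ you are doing essentially what the paper does. In the middle case you split the computation into $(-a,pb)_p=\left(\frac{-a}{p}\right)$ and $S(V_p)=\left(\frac{a}{p}\right)$, whereas the paper writes $(-a,-pb)_p=(-1,pb)_p\,S(V_p)$; the content is the same.

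The real difference is in the case $\langle 1,a,p^2b\rangle$. The paper obtains $\varepsilon_p(\mathcal{O})=\left(\frac{-a}{p}\right)$ by citing Exercise 10 of Chapter 24 of Voight. You instead prove it by showing $\mathrm{rad}(\mathcal{O}_p)=p\mathcal{O}_p+\mathbb{Z}_pe_2e_3+\mathbb{Z}_pe_1e_3$, and that verification goes through:
\begin{itemize}
\item This set is a two-sided ideal. Multiplying $e_2e_3$ or $e_1e_3$ by $e_1e_2$ on either side gives $\pm Q(e_2)e_1e_3$ or $\pm Q(e_1)e_2e_3$.
\item Its square lies in $p\mathcal{O}_p$. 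Every product of two of $e_2e_3,e_1e_3$ reduces, after anticommuting, to a multiple of $e_3^2=Q(e_3)\in p^2\mathbb{Z}_p$.
\item The quotient $\mathbb{F}_p[x]/(x^2+a/4)$ is semisimple, because $p$ is odd and $a$ is a unit.
\end{itemize}
Both arguments then use the same classification facts to name the order: residually split means Eichler of level $p^2$, and residually inert means the non-Eichler order.

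What your route buys is a self-contained argument. It makes transparent that the residue algebra is governed by $-a$ alone, since it is generated by the image of $e_1e_2$ with square $-a/4$. It also directly rules out the residually ramified option $\varepsilon_p=0$. The paper's route is shorter but outsources exactly this step.
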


\begin{proof}
If $p$ is odd and $\Lambda_p\sim \langle 1,1,a\rangle$ then by the Clifford correspondence we would have $\mathrm{discrd}(\mathcal{O}_p) = 1$, so that $B_p \cong M_2(\mathbb{Q}_p)$ is unramified and $\mathcal{O}_p\cong M_2(\mathbb{Z}_p)$ is maximal (since for $B_p$ ramified the unique maximal order has reduced discriminant $p$). This order has Eichler invariant $1$ by the classification of orders in local quaternion algebras.

If $p$ is odd and $\Lambda_p\sim \langle 1,a,pb\rangle$ then $B_p = \left(\frac{-a/4,-a(pab)/4}{\mathbb{Q}_p}\right)\cong \left(\frac{-a,-pb}{\mathbb{Q_p}}\right)$. Note that $(-a,-pb)_p = (-1,-1)_p(-1,pb)_p(a,-1)_p(a,pb)_p = (-1,pb)_pS(V_p)$. Since $B_p$ is ramified if and only if $(-a,-pb)_p = -1$, we either require $S(V_p) = -1$ and $(-1,pb) = 1$ (i.e.\ $p\equiv 1 \bmod 4$) or $S(V_p) = 1$ and $(-1,pb)_p = -1$ (i.e.\ $p\equiv 3 \bmod 4$). Note that by the Clifford correspondence we have $\mathrm{discrd}(\mathcal{O}_p) = p$, and by the classification of orders in local quaternion algebras, we must have that $\mathcal{O}_p$ is maximal with Eichler invariant $-1$ when $B_p$ is ramified and Eichler with Eichler invariant $1$ when $B_p$ is unramified.

If $p$ is odd and $\Lambda_p\sim\langle 1,a,p^2b\rangle$ then $B_p = \left(\frac{-a/4,-a(p^2ab)/4}{\mathbb{Q}_p}\right)$. Note that \[(-a/4,-p^2a^2b/4)_p = (-a,-b)_p = 1\] and so $B_p\cong M_2(\mathbb{Q}_p)$ is unramified. Note that under the Clifford correspondence we must have $\mathrm{discrd}(\mathcal{O}_p) = p^2$. By the classification of orders in local quaternion algebras, there are three possibilities up to isomorphism, two unramified (with Eichler invariants $\pm 1$) and one ramified (with $\varepsilon_p(\mathcal{O}_p) = 0$). Exercise $10$ of Chapter $24$ of Voight \cite{voight2021quaternion} tells us that the Eichler invariant in this case is given by the Legendre symbol $\left(\frac{-a}{p}\right)$, which leads to the explicit conditions given in the result.

If $p=2$ and $\Lambda_p \sim \left\langle\begin{pmatrix}0 & 1\\ 1 & 0\end{pmatrix},2a\right\rangle$ then by the Clifford correspondence we have $\mathrm{discrd}(\mathcal{O}_p) = 1$ and so $B_p\sim M_2(\mathbb{Q}_2)$ is unramified and $\mathcal{O}_p \cong M_2(\mathbb{Z}_2)$ is maximal (again since for $B_p$ ramified the unique maximal order has reduced discriminant $p$). This order has $\varepsilon_p(\mathcal{O}_p) = 1$ by the classification of orders in local quaternion algebras.
\end{proof}

\section{Transfer for Hecke modules of orthogonal modular forms.}

From now on let $(V,Q)$ be a positive definite, ternary quadratic space over $\mathbb{Q}$ and $\Lambda \subset V$ be an even lattice of discriminant $2N^2$ with $N$ odd and square-free. We are interested in proving explicit transfer results for the spaces of orthogonal modular forms $\mathcal{S}(O(V),O(\widehat{\Lambda}),W_{2k} \otimes \theta_d)$ with $k \geq 0$ and $d | N$.

\subsection{$N = p$ prime}

We warm up with the case of $\mathrm{disc}(\Lambda) = 2p^2$ (with $p$ odd by assumption). This case has already been fully solved in the recent paper of De Lauri-Fretwell \cite{delaurifretwell}, but we will recap the argument here since it will prepare us for more general cases.

\begin{lem}
$\Lambda$ belongs to a unique genus with local lattices:

\[\Lambda_t \sim \begin{cases}\langle 1,1,2\rangle & \text{ if } t\nmid 2p\\ \left\langle\begin{pmatrix}0 & 1\\ 1 & 0\end{pmatrix},-2\right\rangle & \text{ if } t = 2\\ \langle r,p,pr\rangle & \text{ if } t = p \equiv 1 \bmod 8\\ \langle r,p,p\rangle & \text{ if } t = p\equiv 3 \bmod 8\\ \langle 1,p,pr\rangle & \text{ if } t = p\equiv 5 \bmod 8\\ \langle 1,p,p\rangle & \text{ if } t = p\equiv 7 \bmod 8\end{cases}\]
\end{lem}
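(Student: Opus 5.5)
We prove the lemma by combining the local classification of Lemma \ref{localisom} with the global constraints coming from positive definiteness and Hilbert reciprocity. The plan is to show that at every prime $t$ the local lattice $\Lambda_t$ is forced to be the one listed. Since a genus is by definition determined by its collection of local lattices, this gives uniqueness of the genus.

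For $t \nmid 2p$ and for $t = 2$, Lemma \ref{localisom} already leaves only one possibility, namely $\langle 1,1,2\rangle$ and $\left\langle\begin{pmatrix}0&1\\1&0\end{pmatrix},-2\right\rangle$ respectively. So the only work is at $t = p$, where Lemma \ref{localisom} offers four candidates in each residue class of $p \bmod 8$.

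We first rule out the two candidates of the form $\langle 1,a,p^2b\rangle$. Because $\Lambda$ is even, we apply the Clifford correspondence of Subsection \ref{Ternary Lattices and Quaternion orders} to get the definite quaternion algebra $B = \mathrm{Cl}^0(V)$. Its discriminant must be divisible by some prime, and since $B$ is definite it ramifies at an odd number of places including $\infty$. Hence $B$ ramifies at an odd (so positive) number of finite primes. By Lemma \ref{ram}, $B_t$ is unramified for $t \nmid 2p$ and for $t = 2$. It is also unramified at $p$ when $\Lambda_p \sim \langle 1,a,p^2b\rangle$. In that case $B$ would be ramified only at $\infty$, which is impossible. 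So $\Lambda_p \sim \langle a,p,pb\rangle$, and moreover $B_p$ must be ramified. Equivalently, one can phrase this step via the Hasse invariants: $S(V_t) = 1$ away from $2$ and $p$, $S(V_\infty) = 1$, and $S(V_2)$ is computed directly from $\langle 1,-1,-2\rangle$-type diagonalisations of the $2$-adic lattice. Reciprocity $\prod_t S(V_t) = 1$ then pins down $S(V_p)$.

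To decide between the two remaining candidates $\langle a,p,pb\rangle$, we use the ramification criterion in Lemma \ref{ram}, after rewriting $\langle a,p,pb\rangle$ in the form $\langle 1,a',pb'\rangle$ up to scaling, which Lemma \ref{scaling}-style rescaling does not affect for ramification. The simplest route is a direct Hilbert-symbol computation. For each candidate and each class of $p \bmod 8$, we compute $(-a,-pb)_p$, or equivalently $S(V_p)$, and keep the unique candidate with $B_p$ ramified. The final answer should be the one where $S(V_p) = -1$ or $S(V_p) = 1$ according to $p \bmod 4$. Comparing with the list of nontrivial Hasse invariants at the end of Lemma \ref{localisom}, this gives exactly $\langle r,p,pr\rangle$, $\langle r,p,p\rangle$, $\langle 1,p,pr\rangle$, $\langle 1,p,p\rangle$ for $p \equiv 1,3,5,7 \bmod 8$.

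Finally, existence of a global lattice with these completions follows from Lemma 5.1 of Chapter 9 of Cassels. The compatibility conditions have just been verified, and nonemptiness holds because $\Lambda$ itself exists.

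The main obstacle is the careful bookkeeping at $p$. Here one must correctly rescale $\langle a,p,pb\rangle$ into the shape used in Lemma \ref{ram}, which involves pulling out the factor $p$ and passing to an equivalent algebra, and then evaluate Legendre and Hilbert symbols for each residue class mod $8$. The factor $2$ in $[ab] = [2]$ makes these symbols depend on $p \bmod 8$ rather than just $p \bmod 4$. I would do this by a direct Hilbert symbol table, cross-checking against the $S(V_p) = -1$ list in Lemma \ref{localisom}.
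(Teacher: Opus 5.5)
Your proof is correct and is essentially the paper's argument. Your Hasse-invariant phrasing ($S(V_\infty)=1$, $S(V_t)=1$ for $t\nmid 2p$, $S(V_2)=-1$ from $\langle 1,-1,-2\rangle$, so reciprocity forces $S(V_p)=-1$, and exactly one candidate in Lemma \ref{localisom} has $S(V_p)=-1$) is precisely the paper's proof, and the detour through ramification of $B=\mathrm{Cl}^0(V)$ is the same reciprocity in another guise, so your Step 3 Hilbert-symbol table is redundant.

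Two slips need fixing. First, a definite quaternion algebra over $\mathbb{Q}$ ramifies at an \emph{even} number of places including $\infty$, hence at an odd number of finite primes. Second, the $p \bmod 4$ criterion of Lemma \ref{ram} refers to the Hasse invariant of the rescaled space $\langle 1,a',pb'\rangle$, whereas for $V_p$ itself (with $\mathrm{disc}(V_p)=[2]$) $B_p$ is ramified exactly when $S(V_p)=-1$ for every $p$; that is why matching against the list in Lemma \ref{localisom} comes out right.
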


\begin{proof}

The first two isometries follow from Lemma \ref{localisom}. Note that $S(V_{\infty}) = 1$ since $(V,Q)$ is positive definite. Since $S(V_t) = 1$ for all $t \nmid 2p$ and $S(V_2) = -1$ we must have $S(V_p) = -1$ (by compatibility of Hasse invariants). Looking at Lemma \ref{localisom} we see that $\Lambda_p$ is uniquely determined up to isometry.
\end{proof}

We can now prove the following transfer result for Hecke modules of orthogonal modular forms attached to this genus.

\begin{thm}\label{2p2}
Let $\Lambda$ be as above. Then for each $k\geq 0$ and $d \,|\, p$ there is a Hecke-equivariant isomorphism (for Hecke operators away from $p$): \[\mathcal{S}(O(V),O(\widehat{\Lambda}),W_{2k}\otimes\theta_d) \cong S_{2k+2}^{\textup{new}}(\Gamma_0(p),\varepsilon_p^{(p)}\varepsilon_d^{(p)})\]
\end{thm}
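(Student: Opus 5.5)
We will apply the local map $\psi_{(p)}$ at $p$ to reduce to a lattice of half-discriminant $p$, to which Theorem \ref{thm: 6.12 in HTV} applies. By the preceding lemma, $\Lambda_p \sim \langle a,p,pb\rangle$ for suitable $a,b\in\{1,r\}$. Proposition \ref{psip} then gives $\Pi_p = \psi_{(p)}(\Lambda_p)$ with $O(\Pi_p) = O(\Lambda_p)$. We define $\Pi = \hat{\psi}_{(p)}(\Lambda)$, i.e.\ the unique lattice in $V$ with $\Pi_t = \Lambda_t$ for $t\neq p$ and $\Pi_p = \psi_{(p)}(\Lambda_p)$. This is a genuine global lattice because local lattices agreeing with $\Lambda_t$ at almost all $t$ glue to a lattice in $V$. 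Then $O(\widehat{\Pi}) = O(\widehat{\Lambda})$, so Lemma \ref{samespace} identifies the two Hecke modules for every weight. The Hecke operators away from $p$ match because the elements $\hat{q}$ for $q\neq p$ coincide.

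Next we rescale. The Gram matrix of $\Pi_p$ is $p\langle pa,1,b\rangle$, so $\Pi$ is $p$ times an integral lattice in $(V,Q^{(1/p)})$. Call this lattice $\Pi'$; by Lemma \ref{scaling} it has the same Hecke module. We must check the following local data for $\Pi'$.
\begin{itemize}
\item At $p$: $\Pi'_p\sim\langle 1,b,pa\rangle$, which has discriminant $p$ up to units.
\item At $t\nmid 2p$: scaling by the unit $1/p$ keeps $\Pi'_t$ unimodular.
\item At $2$: scaling $\langle H,-2\rangle$ by the unit $1/p$ leaves the form even with discriminant $[2]$ up to units, so it remains of the shape $\langle H,2a'\rangle$.
\end{itemize}
Hence $\Pi'$ is even of discriminant $2p$, and by the Clifford correspondence $\mathcal{O}' = \mathrm{Cl}^0(\Pi')$ has reduced discriminant $p$. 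Scaling $V$ by $1/p$ does not change the orthogonal group or its splitting, so the weights $W_{2k}$ and the spinor characters $\theta_d$ are unaffected. This needs a brief check for $\theta_d$: the spinor norm is computed via nrd on $B^\times/\mathbb{Q}^\times$, and the even Clifford algebra of a scaled ternary space is isomorphic to the original.

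By Lemma \ref{ram}, $\mathcal{O}'$ is residually unramified at $2$ and at all $t\nmid 2p$. At $p$, $\mathcal{O}'_p$ is either maximal in a ramified $B_p$ ($\varepsilon_p=-1$) or Eichler ($\varepsilon_p=1$). Since $V$ is positive definite, $B$ is definite and must ramify at some finite prime. The only candidate is $p$, as $B_2$ and $B_t$ are split for $t\neq p$. So $D = p$, $N^- = p$, $N^+ = 1$. This global argument avoids tracking $S(V'_p)$ after scaling, which is the step I expect to need the most care.

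Theorem \ref{thm: 6.12 in HTV} then applies with $N^-=p$. The only admissible $M$ is $M=p$, since $v_p(M)\equiv 1 \bmod 2$, so the right-hand side is $S_{2k+2}^{p\text{-new}}(\Gamma_0(p),\varepsilon_p^{(p)}\varepsilon_d^{(p)})$. At prime level the $p$-new space is the new space, which gives the stated isomorphism.
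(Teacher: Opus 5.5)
Your proposal is correct and follows essentially the same route as the paper: apply $\hat{\psi}_{(p)}$, use Proposition \ref{psip} together with Lemmas \ref{samespace} and \ref{scaling}, rescale by $\frac{1}{p}$ to obtain an even lattice of discriminant $2p$, then invoke Lemma \ref{ram} and Theorem \ref{thm: 6.12 in HTV} with $D = N^- = p$ and $N^+ = 1$. The only differences are minor. You pin down $D=p$ by the global parity-of-ramification argument rather than reading it off the local data at $p$, and you spell out why $W_{2k}\otimes\theta_d$ is unchanged under rescaling, a point the paper leaves implicit.
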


\begin{proof}
Since $\Lambda_p \sim \langle a,p,pb\rangle$ for some $a,b\in\{1,r\}$ we can apply $\hat{\psi}_{(p)}$ to $\Lambda$ to get a new even lattice $\Pi = V \cap \hat{\psi}_{(p)}(\Lambda)$ with $\mathrm{disc}(\Pi) = 2p^4$ and satisfying $O(\widehat{\Lambda}) = O(\widehat{\Pi})$ (since $O(\Lambda_p) = O(\Pi_p)$ by Proposition \ref{psip} and $\Lambda_q = \Pi_q$ for all $q \neq p$).

Scaling the quadratic map allows us to view $\Pi$ as an even lattice in $(V,Q^{(\frac{1}{p})})$ with $\mathrm{disc}(\Pi) = 2p$ and local lattices:

\[\Pi_t \sim\begin{cases} \langle p^{-1}, p^{-1}, 2p^{-1}\rangle \sim \langle 1,1,2p\rangle & \text{ if } t \nmid 2p\\ \left\langle \begin{pmatrix}0 & p^{-1}\\ p^{-1} & 0\end{pmatrix}, -2p^{-1}\right\rangle \sim  \left\langle\begin{pmatrix}0 & 1\\ 1 & 0\end{pmatrix}, -2p\right\rangle & \text{ if } t = 2\\ \langle 1,r,pr\rangle & \text{ if } t = p\equiv 1 \bmod 8\\ \langle 1,1,pr\rangle & \text{ if } t = p\equiv 3 \bmod 8\\ \langle 1,r,p\rangle & \text{ if } t=p\equiv 5 \bmod 8\\ \langle 1,1,p\rangle & \text{ if } t=p\equiv 7 \bmod 8\end{cases}\]

By Lemma \ref{samespace} and Lemma \ref{scaling} there is an equality of Hecke modules: \[\mathcal{S}(O(V),O(\widehat{\Lambda}), W) = \mathcal{S}(O(V),O(\widehat{\Pi}), W)\] for any representation $W$ of $O(V)$.

By Lemma \ref{ram} the quadratic space $(V,Q^{(\frac{1}{p})})$ gives rise to a quaternion algebra $B = \mathrm{Cl}^0(V)$ of discriminant $p$, and the order $\mathcal{O} = \mathrm{Cl}^0(\Pi)$ is maximal with $\mathrm{discrd}(\mathcal{O}) = p$, hence residually unramified everywhere with $\varepsilon_p(\mathcal{O}) = -1$ (also by Lemma \ref{ram}). The result now follows from the results of Hein-Tornaria-Voight referenced earlier (see Theorem \ref{thm: 6.12 in HTV}).
\end{proof}

\begin{example}
We give an example of Theorem \ref{2p2} with trivial weight, using the Algebraic Modular Forms package in Magma \cite{magma}. We refer to \cite{hein2016orthogonal} for detailed discussions of how these calculations are done in practice.

Consider the even lattice $\Lambda$ with Gram matrix:

\[T = \begin{pmatrix}4 & 1 & -1\\ 1 & 28 & 9\\ -1 & 9 & 28\end{pmatrix}\]

Then $\mathrm{disc}(\Lambda) = 2738 = 2\cdot 37^2$ and it turns out that $\text{Cl}(\Lambda) = \{[\Lambda_1], [\Lambda_2]\}$ is of size two. Hence \[\mathcal{S}(O(V),O(\widehat{\Lambda}),\mathbb{C}) = \mathbb{C}\varphi_1\oplus \mathbb{C}\varphi_2\] where $\varphi_1,\varphi_2:\mathrm{Cl}(\Lambda)\rightarrow\mathbb{C}$ satisfy: \begin{align*}\varphi_1([\Lambda_1]) = 1 \qquad \varphi_1([\Lambda_2]) = 0\\ \varphi_2([\Lambda_1]) = 0\qquad \varphi_2([\Lambda_2]) = 1\end{align*}

The Hecke operators $T_p$ for $p\neq 37$ are represented by matrices (which are also the adjacency matrices for the $p$-neighbour graphs $G_p$):

\begin{align*}
&T_2 = \begin{pmatrix}
1 & 2\\ 1 & 2
\end{pmatrix}, 
T_3 = \begin{pmatrix}
2 & 2\\ 1 & 3
\end{pmatrix},
T_5 = \begin{pmatrix}
2 & 4\\ 2 & 4
\end{pmatrix},
T_7 = \begin{pmatrix}
2 & 6\\ 3 & 5
\end{pmatrix},\\
&T_{11} = \begin{pmatrix}
6 & 6\\ 3 & 9
\end{pmatrix}, T_{13} = \begin{pmatrix}
2 & 12\\ 6 & 8
\end{pmatrix},
T_{17} = \begin{pmatrix}
10 & 8\\ 4 & 14
\end{pmatrix},
T_{19} = \begin{pmatrix}
8 & 12\\ 6 & 14
\end{pmatrix},
\end{align*}

The simultaneous eigenvectors are $v_1 = [1,1]$ and $v_2 = [2,-1]$ (coordinates under the basis above). The system of eigenvalues corresponding to $v_1$ is: \begin{align*}\lambda_2 &= 3,\quad \lambda_3 = 4,\quad \lambda_5 = 6,\quad \lambda_7 = 8,\quad \lambda_{11} = 12,\quad \lambda_{13} = 14,\quad \lambda_{17} = 18,\quad \lambda_{19} = 20,\quad ...\end{align*} which we immediately recognise as $\lambda_p = p+1$, as expected. 

The system of eigenvalues corresponding to $v_2$ is  \begin{align*}\lambda_2 = 0,\quad \lambda_3 = 1,\quad \lambda_5 = 0,\quad \lambda_7 = -1,\quad \lambda_{11} = 3,\quad \lambda_{13} = -4,\quad \lambda_{17} = 6,\quad \lambda_{19} = 2,\quad ...\end{align*} which match the unique newform $f\in S_2^{\text{new}}(\Gamma_0(37),-1)$ having LMFDB label \url{37.2.a.b}: \begin{align*}f(\tau) &= q+q^3-2q^4-q^7-2q^9+3q^{11}-2q^{12}-4q^{13}+4q^{16}+6q^{17}+2q^{19}+...\end{align*}

Note that there exists a newform $f_2\in S_2^{\text{new}}(\Gamma_0(37),+1)$ with LMFDB label \url{37.2.a.a}. We do not see this at trivial weight, in accordance with Theorem \ref{2p2}. However, if we twist the representation by $\theta_{37}$ then we do see this newform, also in accordance with Theorem \ref{2p2} (since the sign vector $\varepsilon_{37}^{(37)}\varepsilon_{37}^{(37)} = (+1)$).

\end{example}

\begin{example}
We consider an example with non-trivial weight. Again, we refer to \cite{hein2016orthogonal} for detailed discussions of how these calculations are done in practice.

Consider the even lattice $\Lambda$ with Gram matrix:

\[T = \begin{pmatrix}2 & 1 & 0\\ 1 & 2 & 0\\ 0 & 0 & 6\end{pmatrix}.\] Then $\mathrm{disc}(\Lambda) = 18 = 2\cdot 3^2$ and it turns out that $\text{Cl}(\Lambda) = \{[\Lambda]\}$ is trivial. Hence \[\mathcal{S}(O(V),O(\widehat{\Lambda}),\mathbb{C}) = \{0\}\] and no modular data is recovered (agreeing with the fact that $S_2^{\text{new}}(\Gamma_0(3)) = \{0\}$).

When we use the weight $W_2$ we find that \[\mathcal{S}(O(V),O(\widehat{\Lambda}),W_2) = \{0\},\] which again agrees with the fact that $S_4^{\text{new}}(\Gamma_0(3)) = \{0\}$.

However, at weight $W_4$ we find that \[\mathcal{S}(O(V),O(\widehat{\Lambda}),W_4) = \mathbb{C}\varphi,\] for some explicit function $\varphi:\text{gen}(\Lambda) \rightarrow W_4$. Applying the Hecke operators $T_p$ for $p\neq 3$ gives eigenvalues: \begin{align*}\lambda_2 &= -6,\quad \lambda_5 = 6,\quad \lambda_7 = -40,\quad \lambda_{11} = -564,\quad \lambda_{13} = 638,\quad \lambda_{17} = 882,\quad \lambda_{19} = -556,\quad ...\end{align*} and we immediately find a match the unique newform $f\in S_6^{\text{new}}(\Gamma_0(3))$ having LMFDB label \url{3.6.a.a} (which has Atkin-Lehner sign $-1$): \begin{align*}f(\tau) &= q-6q^2+9q^3+4q^4+6q^5-54q^6-40q^7+168q^8+81q^9-36q^{10}-564q^{11}\\ &-36q^{12}+638q^{13}+240q^{14}+54q^{15}-1136q^{16}+882q^{17}-486q^{18}-556q^{19}+...\end{align*}

Finally, if the weight $W_4\otimes\theta_3$ is used then we find that: \[\mathcal{S}(O(V),O(\widehat{\Lambda}),W_4\otimes\theta_3) = \{0\}\] which matches the fact that $S_6^{\text{new}}(\Gamma_0(3),+1) = \{0\}$ (since the unique newform of weight $6$ and level $3$ has Atkin-Lehner sign $-1$).
\end{example}

\subsection{$N = pq$ semiprime}

We move on to the case $\mathrm{disc}(\Lambda) = 2p^2q^2$, since this illuminates a couple of additional features that will play a role in the general case. Recall that $p$ and $q$ are odd by assumption.

\begin{lem}
$\Lambda$ belongs to one of six genera. Up to swapping $p$ and $q$ these have local lattices:

\[\Lambda_t \sim \begin{cases}
\langle 1,1,2\rangle & \text{ if } t \nmid 2pq\\ \left\langle\begin{pmatrix}0 & 1\\ 1 & 0\end{pmatrix}, -2pq\right\rangle & \text{ if } t = 2\\ \langle r,p,pr\rangle & \text{ if } t = p \equiv 1 \bmod 8\\ \langle r,p,p\rangle & \text{ if } t = p \equiv 3 \bmod 8\\ \langle 1,p,pr\rangle & \text{ if } t = p \equiv 5 \bmod 8\\ \langle 1,p,p\rangle & \text{ if } t = p \equiv 7 \bmod 8\end{cases}\]

and

\vspace{0.1in}
\textbf{Genus A:}

\[\Lambda_q \sim \begin{cases}
\langle 1,q,q\rangle & \text{ if } q \equiv 1 \bmod 8\\ \langle 1,q,qr\rangle & \text{ if } q \equiv 3 \bmod 8\\ \langle r,q,q\rangle & \text{ if } q \equiv 5 \bmod 8\\ \langle r,q,qr\rangle & \text{ if } q \equiv 7 \bmod 8
\end{cases}\]

\textbf{Genus B:}

\[\Lambda_q \sim \begin{cases}
\langle 1,1,q^2\rangle & \text{ if } q \equiv 1 \bmod 8\\ \langle 1,r,q^2\rangle & \text{ if } q \equiv 3 \bmod 8\\ \langle 1,1,q^2r\rangle & \text{ if } q \equiv 5 \bmod 8\\ \langle 1,r,q^2r\rangle & \text{ if } q \equiv 7 \bmod 8
\end{cases}\]
\newpage
\textbf{Genus C:}

\[\Lambda_q \sim \begin{cases}
\langle 1,r,q^2r\rangle & \text{ if } q \equiv 1 \bmod 8\\ \langle 1,1,q^2r\rangle & \text{ if } q \equiv 3 \bmod 8\\ \langle 1,r,q^2\rangle & \text{ if } q \equiv 5 \bmod 8\\ \langle 1,1,q^2\rangle & \text{ if } q \equiv 7 \bmod 8
\end{cases}\]
\end{lem}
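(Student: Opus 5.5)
The plan is to proceed exactly as in the prime case $N=p$: first restrict the possible local lattices using Lemma \ref{localisom}, then use Hilbert reciprocity for the Hasse invariants to cut down the combinations, and finally check that every surviving combination actually occurs.

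First, the local lattices away from $p,q$. For $t\nmid 2pq$, Lemma \ref{localisom} gives $\Lambda_t\sim\langle 1,1,2\rangle$, and so $S(V_t)=1$ since all entries are $t$-adic units. For $t=2$, Lemma \ref{localisom} gives $\Lambda_2\sim\left\langle\begin{pmatrix}0&1\\1&0\end{pmatrix},-2\right\rangle$. This agrees with the form written in the statement, because $[-2pq\cdot pq]$ differs from $[-2]$ only by a $2$-adic unit square class. One checks this either via the $2$-adic symbols as in the proof of Lemma \ref{localisom}, or by noting that the $2$-modular component must have determinant in the class forced by $\mathrm{disc}=[2p^2q^2]=[2]$. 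Diagonalising over $\mathbb{Q}_2$ gives $V_2\sim\langle 1,-1,-2\rangle$. Hence
\[S(V_2)=(1,-1)_2(1,-2)_2(-1,-2)_2=(-1,-1)_2(-1,2)_2=-1.\]
Also $S(V_\infty)=1$ because $V$ is positive definite.

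Next, the constraint at $p$ and $q$. Hilbert reciprocity $\prod_{t\le\infty}S(V_t)=1$ now forces $S(V_p)S(V_q)=-1$, so exactly one of $S(V_p)$, $S(V_q)$ equals $-1$. Up to swapping $p$ and $q$ we may assume $S(V_p)=-1$ and $S(V_q)=1$. By the second half of Lemma \ref{localisom}, $S(V_p)=-1$ pins $\Lambda_p$ down to the single lattice listed for each residue of $p\bmod 8$. The condition $S(V_q)=1$ leaves the three lattices of Lemma \ref{localisom} other than the exceptional one. Reading these off for $q\equiv 1,3,5,7\bmod 8$ gives exactly Genera A, B and C. For example, when $q\equiv 1\bmod 8$ the excluded lattice is $\langle r,q,qr\rangle$, leaving $\langle 1,q,q\rangle$, $\langle 1,1,q^2\rangle$ and $\langle 1,r,q^2r\rangle$. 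Two choices of which prime carries the $-1$, times three choices at the other prime, give the six genera.

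Finally, existence, which is the only step needing care, though it is mild. Each of the six collections of local data must come from a global even lattice. By Lemma 5.1 of Chapter 9 of Cassels, it suffices that a ternary positive definite rational quadratic space exists with these local completions. The discriminant class is $[2]$ at every place, the Hasse invariants are almost all $1$ with product $1$, and $n=3$ avoids both exceptional cases. So the space exists, and hence so does a global lattice with the prescribed completions. The global lattice has $\mathrm{disc}=2p^2q^2$, and it is even since it is integral at all odd primes and even at $2$. Distinct local data give distinct genera, so there are exactly six.
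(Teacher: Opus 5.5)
Your route is the paper's: Lemma \ref{localisom} gives $\Lambda_t$ for $t\nmid 2pq$ and for $t=2$; then $S(V_\infty)=1$, $S(V_2)=-1$ and Hilbert reciprocity give $S(V_p)S(V_q)=-1$; you swap so that $S(V_p)=-1$, and read off the unique $\Lambda_p$ and the three admissible $\Lambda_q$. Your evaluation $S(V_2)=(-1,-2)_2=-1$ is correct. Your existence check via Cassels is also correct, and it is a real addition: the paper's proof as written only shows that there are at most six genera.

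The step that fails is your reconciliation of $\Lambda_2$ with the statement.

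\begin{itemize}
\item The lattice $\left\langle\begin{pmatrix}0&1\\1&0\end{pmatrix},-2pq\right\rangle$ has Gram determinant $2pq$, while $\left\langle\begin{pmatrix}0&1\\1&0\end{pmatrix},-2\right\rangle$ has Gram determinant $2$.
\item Isometric $\mathbb{Z}_2$-lattices have determinants differing by a factor in $(\mathbb{Z}_2^{\times})^2$, so these two are isometric if and only if $pq\equiv 1\bmod 8$.
\item For $p=3$, $q=5$ they are not isometric; even the underlying $\mathbb{Q}_2$-spaces have different discriminants, $[30]\neq[2]$.
\end{itemize}

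Your justification, that $[-2pq\cdot pq]=[-2p^2q^2]$ lies in the class of $[-2]$, compares the wrong number with $-2$. Your alternative remark, that $\mathrm{disc}=[2]$ forces the class of the $2$-modular constituent, actually proves that this constituent is $\langle -2\rangle$ up to unit squares. That contradicts $-2pq$ rather than supporting it.

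The right conclusion is that the $-2pq$ in the statement is a typo for $-2$. Three places in the paper confirm this:
\begin{itemize}
\item it is what Lemma \ref{localisom} gives;
\item it is what the paper's proof of this lemma cites;
\item it is what the proof of Theorem \ref{2p2q2} uses when it rescales $\Lambda_2$ to $\left\langle\begin{pmatrix}0&1\\1&0\end{pmatrix},-2p\right\rangle$.
\end{itemize}
You should flag the discrepancy rather than assert an isometry.

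Read literally, the typo would even break the argument: $(-1,-2pq)_2=-(-1,pq)_2=+1$ when $pq\equiv 3\bmod 4$, and reciprocity would then force $S(V_p)=S(V_q)$. Your Hasse computation actually uses $\Lambda_2\sim\left\langle\begin{pmatrix}0&1\\1&0\end{pmatrix},-2\right\rangle$, so once the statement is corrected the rest of your proof stands.
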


\begin{proof}
The first two isometries follow from Lemma \ref{localisom}.

Note that $S(V_{\infty}) = 1$ since $(V,Q)$ is positive definite. Since $S(V_t) = 1$ for all primes $t \nmid 2pq$ and $S(V_2) = -1$ we must have $S(V_p)S(V_q) = -1$ (by compatibility of Hasse invariants). We can swap $p$ and $q$ so that $S(V_p) = -1$ and $S(V_q) = 1$. Looking at Lemma \ref{localisom} we see that $\Lambda_p$ is uniquely determined up to isometry and $\Lambda_q$ takes one of three possibilities up to isometry.
\end{proof}

We can now prove transfer results for these genera.

\begin{thm}\label{2p2q2}
For each $k\geq 0$ and $d\,|\, pq$ there are the following Hecke-equivariant isomorphisms (for Hecke operators away from $pq$):

\[\mathcal{S}(O(V),O(\widehat{\Lambda}),W_{2k}\otimes\theta_d) \cong \begin{cases} S_{2k+2}^{p\mathrm{-new}}(\Gamma_0(pq),\varepsilon_p^{(pq)}\varepsilon_d^{(pq)})\\ S_{2k+2}^{p\mathrm{-new}}(\Gamma_0(pq^2),\varepsilon_p^{(pq^2)}\varepsilon_d^{(pq^2)})\\ S_{2k+2}^{\textup{new}}(\Gamma_0(p),\varepsilon_{p}^{(p)}\varepsilon_d^{(p)})\oplus S_{2k+2}^{\textup{new}}(\Gamma_0(pq^2),\varepsilon_{p}^{(pq^2)}\varepsilon_d^{(pq^2)})\end{cases}\] according to whether $\Lambda$ is in Genus $A$, Genus $B$ or Genus $C$, respectively.
\end{thm}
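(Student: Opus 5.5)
The plan is to copy the proof of Theorem~\ref{2p2}. We apply Watson-style maps $\hat{\psi}$ at those primes where $\Lambda$ has a local component of the form $\langle a,t,tb\rangle$, and then rescale the quadratic map. The aim is a lattice $\Pi$ with $O(\widehat{\Pi})=O(\widehat{\Lambda})$ whose even Clifford order is residually unramified everywhere, so that Theorem~\ref{thm: 6.12 in HTV} applies. In every genus $\Lambda_p$ has the form $\langle a,p,pb\rangle$, so we always apply $\hat{\psi}_{(p)}$. In Genus A, $\Lambda_q$ also has this form, so we additionally apply $\hat{\psi}_{(q)}$. In Genera B and C we leave $q$ alone.

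\emph{Genus A.} Set $\Pi=V\cap\hat{\psi}_{(p)}\hat{\psi}_{(q)}(\Lambda)$. By Proposition~\ref{psip}, applied at $p$ and at $q$ separately, and since $\Pi_t=\Lambda_t$ for $t\neq p,q$, we get $O(\widehat{\Pi})=O(\widehat{\Lambda})$. The discriminant of $\Pi$ is $2p^4q^4$. Scaling by $\frac{1}{pq}$ gives an even lattice of discriminant $2pq$. Lemmas~\ref{samespace} and \ref{scaling} then identify the two Hecke modules.

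The local components after scaling are as follows.
\begin{itemize}
\item At $p$: scaling $\langle p^2a,p,pb\rangle$ by $\frac{1}{pq}$ gives, up to unit squares, $\langle 1,b',pa'\rangle$, where $a',b'$ differ from $a,b$ by the class of $q$ mod $p$.
\item At $q$: the analogous computation gives $\langle 1,*,q*\rangle$.
\item At $2$ and at primes $t\nmid 2pq$: the components become as in Theorem~\ref{2p2}.
\end{itemize}
Lemma~\ref{ram} then shows that $\mathcal{O}=\mathrm{Cl}^0(\Pi)$ has discriminant $pq$ and is residually unramified at both $p$ and $q$, and unramified at $2$.

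It remains to decide which of $p,q$ ramifies in $B$. Since $B$ is definite, it ramifies at an odd number of finite primes, and Lemma~\ref{ram} shows that $2$ is not among them. So $\mathrm{disc}(B)$ is $p$ or $q$. We determine which one by computing the Hasse invariant of the scaled space at $p$ using Lemma~\ref{ram}. We expect $D=p$, because at $p$ the situation mirrors the case $N=p$. This gives $N^-=p$ and $N^+=q$, and Theorem~\ref{thm: 6.12 in HTV} with $M=p$ yields $S_{2k+2}^{p\text{-new}}(\Gamma_0(pq),\varepsilon_p\varepsilon_d)$.

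\emph{Genera B and C.} Apply only $\hat{\psi}_{(p)}$ and scale by $\frac1p$. This gives discriminant $2pq^2$. The component at $q$ becomes $\langle p^{-1},p^{-1}a,p^{-1}q^2b\rangle\sim\langle 1,a',q^2b'\rangle$, where $a'$ and $a$ agree exactly when $p$ is a square mod $q$. By Lemma~\ref{ram}, $\varepsilon_q(\mathcal{O})=\left(\frac{-a'}{q}\right)$, and $B$ is unramified at $q$, so $D=p$.

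The case tables for Genera B and C are arranged, congruence class by congruence class, so that the following holds.
\begin{itemize}
\item Genus B gives $\varepsilon_q=+1$. Then $N^+=q^2$ and $N^-=p$, the only admissible $M$ is $M=p$, and we obtain $S^{p\text{-new}}_{2k+2}(\Gamma_0(pq^2),\dots)$.
\item Genus C gives $\varepsilon_q=-1$. Then $N^+=1$ and $N^-=pq^2$, the admissible $M$ are $M=p$ and $M=pq^2$, and we obtain $S^{\mathrm{new}}(\Gamma_0(p))\oplus S^{\mathrm{new}}(\Gamma_0(pq^2))$.
\end{itemize}

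The main obstacle is the bookkeeping of square classes under rescaling by the unit $p$ (respectively $q$) at the other prime. In the listed congruence cases the scaling factor may change $a\in\{1,r\}$, and it is not clear by inspection that this happens exactly so as to produce $\varepsilon_q=\pm1$ as claimed, or to place the ramification of $B$ at $p$ in Genus A. I would first verify this in Genus B/C using quadratic reciprocity together with the fact that $S(V_q)=1$. If the signs come out reversed, the genus labels would need to be matched to the claimed forms differently.
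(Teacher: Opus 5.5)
Your strategy is the paper's: apply $\hat{\psi}_{(p)}$ in every genus, additionally apply $\hat{\psi}_{(q)}$ in Genus A (doing both at once rather than in sequence makes no difference), rescale, read off the local orders via Lemma~\ref{ram}, and invoke Theorem~\ref{thm: 6.12 in HTV}. Your bookkeeping of $N^\pm$ and of the admissible $M$ is also correct. The gap is that the step deciding \emph{which} classical space occurs in each genus is left as an expectation, and the one explicit computation you give for it is wrong. You claim that rescaling $\langle 1,a,q^2b\rangle$ by the $q$-adic unit $p^{-1}$ gives $\langle 1,a',q^2b'\rangle$ with $a'\neq a$ when $p$ is a non-square mod $q$. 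In fact the unimodular block $\langle p^{-1},p^{-1}a\rangle$ has discriminant $p^{-2}a\equiv a$, and unimodular $\mathbb{Z}_q$-lattices are classified by rank and discriminant. Hence $\langle p^{-1},p^{-1}a,p^{-1}q^2b\rangle\sim\langle 1,a,pq^2b\rangle$: the class $a$ never changes, and only the $q^2$-modular entry does. This is forced anyway, since scaling by a unit does not change the local order $\mathrm{Cl}^0(\Lambda_q)$. With your version, $\varepsilon_q(\mathcal{O})=\left(\frac{-a'}{q}\right)$ would depend on $\left(\frac{p}{q}\right)$, which reciprocity together with $S(V_q)=1$ cannot control, so the check you plan would not go through. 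With the correct version, $\varepsilon_q=\left(\frac{-a}{q}\right)$ can be read straight off the tables. In Genus B it is $+1$ in all four classes mod $8$; for example, for $q\equiv 3\bmod 8$ one gets $\left(\frac{-r}{q}\right)=(-1)(-1)=1$. In Genus C it is $-1$ in all four classes. No reciprocity is needed.

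The same slip affects Genus A, and there ``we expect $D=p$'' still needs an argument. The rescaled $p$-component is $\langle 1,b,pqa\rangle$: $b$ is unchanged and only $a$ picks up the factor $q$, not both as you state. The cleanest argument avoids the rescaled lattice entirely. The algebra $B=\mathrm{Cl}^0(V)$ is independent of scaling, since $(-\lambda^2a_1a_2,-\lambda^2a_1a_3)\cong(-a_1a_2,-a_1a_3)$. For the original $V_t\sim\langle a,t,tb\rangle$ one computes $(-at,-atb)_t=(-at,-b)_t=\left(\frac{-b}{t}\right)=S(V_t)$. Hence $B$ ramifies at $p$, where $S(V_p)=-1$, and not at $q$, where $S(V_q)=1$; so $D=p$ in all three genera. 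With these two local facts in place, your argument goes through exactly as in the paper.
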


\begin{proof}
In all cases we have that $\Lambda_p \sim \langle a,p,pb\rangle$, so that we can apply $\hat{\psi}_{(p)}$ to $\Lambda$ to get a new even lattice $\Pi = V \cap \hat{\psi}_{(p)}(\Lambda)$ with $\mathrm{disc}(\Pi) = 2p^4q^2$ and satisfying $O(\widehat{\Lambda}) = O(\widehat{\Pi})$ (since $O(\Lambda_p) = O(\Pi_p)$ by Proposition \ref{psip} and $\Lambda_t = \Pi_t$ for all primes $t \neq p$).

Scaling the quadratic map allows us to view $\Pi$ as an even lattice in $(V,Q^{\frac{1}{p}})$ with $\mathrm{disc}(\Pi) = 2pq^2$ and local lattices:

\[\Pi_t \sim \begin{cases}\langle p^{-1}, p^{-1}, 2p^{-1}\rangle \sim \langle 1,1,2p\rangle & \text{ if } t \nmid 2pq\\ \left\langle \begin{pmatrix}0 & p^{-1}\\ p^{-1} & 0\end{pmatrix}, -2p^{-1}\right\rangle \sim \left\langle \begin{pmatrix}0 & 1\\ 1 & 0\end{pmatrix}, -2p\right\rangle & \text{ if } t = 2\\ \langle 1,r,pr\rangle & \text{ if } t = p\equiv 1 \bmod 8\\ \langle 1,1,pr\rangle & \text{ if } t=p\equiv 3 \bmod 8\\ \langle 1,r,p\rangle & \text{ if } t=p\equiv 5 \bmod 8\\ \langle 1,1,p\rangle  & \text{ if } t=p\equiv 7 \bmod 8 \end{cases}\] 

and

\vspace{0.1in}
\textbf{Genus A:}

\[\Lambda_q \sim \begin{cases}
\langle p^{-1},qp^{-1},qp^{-1}\rangle \sim \langle p,q,q\rangle & \text{ if } q \equiv 1 \bmod 8\\ \langle p^{-1},qp^{-1},qrp^{-1}\rangle \sim \langle p,q,qr\rangle & \text{ if } q \equiv 3 \bmod 8\\ \langle rp^{-1},qp^{-1},qp^{-1}\rangle \sim \langle rp, q, q \rangle & \text{ if } q \equiv 5 \bmod 8\\ \langle rp^{-1},qp^{-1},qrp^{-1}\rangle \sim \langle rp,q,qr \rangle & \text{ if } q \equiv 7 \bmod 8
\end{cases}\]

\newpage
\textbf{Genus B:}

\[\Lambda_q \sim \begin{cases}
\langle p^{-1},p^{-1},q^2p^{-1}\rangle \sim \langle 1,1,pq^2\rangle & \text{ if } q \equiv 1 \bmod 8\\ \langle p^{-1},rp^{-1},q^2p^{-1}\rangle \sim \langle 1,r,pq^2\rangle & \text{ if } q \equiv 3 \bmod 8\\ \langle p^{-1},p^{-1},q^2rp^{-1}\rangle \sim \langle 1,1,pq^2r\rangle & \text{ if } q \equiv 5 \bmod 8\\ \langle p^{-1},rp^{-1},q^2rp^{-1}\rangle \sim \langle 1,r,pq^2r\rangle & \text{ if } q \equiv 7 \bmod 8
\end{cases}\]

\textbf{Genus C:}

\[\Lambda_q \sim \begin{cases}
\langle p^{-1},rp^{-1},q^2rp^{-1}\rangle \sim\langle 1,r,pq^2r\rangle & \text{ if } q \equiv 1 \bmod 8\\ \langle p^{-1},p^{-1},q^2rp^{-1}\rangle\sim \langle 1,1,pq^2r\rangle & \text{ if } q \equiv 3 \bmod 8\\ \langle p^{-1},rp^{-1},q^2p^{-1}\rangle \sim \langle 1,r,pq^2\rangle & \text{ if } q \equiv 5 \bmod 8\\ \langle p^{-1},p^{-1},q^2p^{-1}\rangle \sim \langle 1,1,pq^2\rangle & \text{ if } q \equiv 7 \bmod 8
\end{cases}\]

By Lemma \ref{samespace} and Lemma \ref{scaling} we get an equality of Hecke modules: \[\mathcal{S}(O(V),O(\widehat{\Lambda}), W) = \mathcal{S}(O(V),O(\widehat{\Pi}), W)\] for any representation $W$ of $O(V)$.

If $\Lambda$ is in Genus B or Genus C then Lemma \ref{ram} tells us the following information. The quadratic space $(V,Q^{\frac{1}{p}})$ gives rise to a quaternion algebra $B = \mathrm{Cl}^0(V)$ of discriminant $p$. The order $\mathcal{O} = \mathrm{Cl}^0(\Pi)$ has discriminant $\mathrm{discrd}(\mathcal{O}) = pq^2$ and $\mathcal{O}_p$ is maximal (with $\varepsilon_p(\mathcal{O}) = -1$). For Genus B the order $\mathcal{O}_q$ is Eichler with $\varepsilon_q(\mathcal{O}) = 1$, whereas for Genus C it is non-Eichler with $\varepsilon_q(\mathcal{O}) = -1$. Then $\mathcal{O}$ is residually unramified everywhere and the relevant isomorphisms then follow from the results of Hein-Tornaria-Voight (see Theorem \ref{thm: 6.12 in HTV}).

If $\Lambda$ is in Genus A then $\Pi_q \sim \langle a,q,qb\rangle$ for some $a,b\in\{1,r\}$, so that we can apply $\hat{\psi}_{(q)}$ to $\Pi$ to get a new even lattice $\Omega = V \cap \hat{\psi}_{(q)}(\Pi)$ with $\mathrm{disc}(\Omega) = 2pq^4$ and satisfying $O(\widehat{\Pi}) = O(\widehat{\Omega})$ (since $O(\Pi_q) = O(\Omega_q)$ by Proposition \ref{psip} and $\Pi_t = \Omega_t$ for all primes $t \neq q$).

Scaling the quadratic map allows us to view $\Omega$ as an even lattice in $(V,Q^{(\frac{1}{pq})})$ with $\mathrm{disc}(\Sigma) = 2p$ and local lattices:

\[\Omega_t \sim \begin{cases}\langle q^{-1},q^{-1},2pq^{-1}\rangle \sim \langle 1,1,2pq\rangle & \text{ if } t \nmid 2pq\\ \left\langle \begin{pmatrix}0 & q^{-1}\\ q^{-1} & 0\end{pmatrix}, -2pq^{-1}\right\rangle \sim \left\langle \begin{pmatrix}0 & 1\\ 1 & 0\end{pmatrix}, -2pq\right\rangle & \text{ if } t = 2\\ \langle q^{-1},rq^{-1},prq^{-1}\rangle \sim \langle 1,r,pqr\rangle & \text{ if } t = p\equiv 1 \bmod 8\\ \langle q^{-1},q^{-1},prq^{-1}\rangle \sim \langle 1,1,pqr\rangle & \text{ if } t=p\equiv 3 \bmod 8\\ \langle q^{-1},rq^{-1},pq^{-1}\rangle\sim \langle 1,r,pq\rangle & \text{ if } t=p\equiv 5 \bmod 8\\ \langle q^{-1},q^{-1},pq^{-1}\rangle \sim\langle 1,1,pq\rangle & \text{ if } t=p\equiv 7 \bmod 8\\ \langle 1,1,pq \rangle & \text{ if } t=q \equiv 1 \bmod 8\\ \langle 1,r,pq\rangle & \text{ if } t=q \equiv 3 \bmod 8\\ \langle 1,1,pqr \rangle & \text{ if } t=q \equiv 5 \bmod 8\\ \langle 1,r,pqr \rangle & \text{ if } t=q \equiv 7 \bmod 8\end{cases}\] 

By Lemma \ref{samespace} and Lemma \ref{scaling} there is a further equality of Hecke modules: \[\mathcal{S}(O(V),O(\widehat{\Pi}),W) = \mathcal{S}(O(V),O(\widehat{\Omega}), W)\] for any representation $W$ of $O(V)$.

Lemma \ref{ram} tells us that the quadratic space $(V,Q^{(\frac{1}{pq})})$ still gives rise to a quaternion algebra $B = \mathrm{Cl}^0(V)$ of discriminant $p$, and that the order $\mathcal{O} = \mathrm{Cl}^0(\Omega)$ is Eichler with square-free discriminant $\mathrm{discrd}(\mathcal{O}) = pq$, hence residually unramified everywhere with $\varepsilon_p(\mathcal{O}) = -1$ and $\varepsilon_q(\mathcal{O}) = 1$. The result now follows from the results of Hein-Tornaria-Voight discussed earlier (see Theorem \ref{thm: 6.12 in HTV}).
\end{proof}

We consider three examples of trivial weight, once again using the Algebraic Modular Forms package in Magma. As usual, we refer to \cite{hein2016orthogonal} for detailed discussions of how these calculations are done in practice.

\begin{example}
Consider the even lattice $\Lambda$ with Gram matrix: \[T = \begin{pmatrix}
10 & -1 & -3\\ -1 & 10 & -3\\ -3 & -3 & 24
\end{pmatrix}\] This has $\mathrm{disc}(\Lambda) = 2178 = 2\cdot 3^2\cdot 11^2$ and $\Lambda$ is in Genus A with $p=11$ and $q = 3$ since \begin{align*}\Lambda_3 &\sim \langle 1, 3, 6\rangle\sim \langle 1,q,qr\rangle\\ \Lambda_{11} &\sim \langle 10,11,11 \rangle\sim \langle r,p,p\rangle.\end{align*} It turns out that $\text{Cl}(\Lambda) = \{[\Lambda_1],[\Lambda_2],[\Lambda_3]\}$ is of size three. Hence: \[\mathcal{S}(O(V),O(\widehat{\Lambda}),\mathbb{C}) = \mathbb{C}\varphi_1\oplus\mathbb{C}\varphi_2\oplus\mathbb{C}\varphi_3,\] where as before $\varphi_i([\Lambda_j]) = \delta_{i,j}$.

The Hecke operators $T_p$ for $p\neq 3,11$ are represented by matrices (which are also the adjacency matrices for the $p$-neighbour graphs $G_p$): \begin{align*}T_2 &= \begin{pmatrix}2 & 1 & 0\\ 2 & 0 & 1\\ 0 & 3 & 0\end{pmatrix},\quad T_5 = \begin{pmatrix}3 & 2 & 1\\ 4 & 2 & 0\\ 6 & 0 & 0\end{pmatrix},\quad T_7 = \begin{pmatrix}6 & 2 & 0\\ 4 & 2 & 2\\ 0 & 6 & 2\end{pmatrix},\\ \quad T_{13} &= \begin{pmatrix}8 & 4 & 2\\ 8 & 6 & 0\\ 12 & 0 & 2\end{pmatrix},\quad T_{17} = \begin{pmatrix}10 & 6 & 2\\ 12 & 4 & 2\\ 12 & 6 & 0\end{pmatrix},\quad T_{19} = \begin{pmatrix}12 & 6 & 2\\ 12 & 6 & 2\\ 12 & 6 & 2\end{pmatrix},\quad...\end{align*}

The simultaneous eigenvectors are $v_1 = [1,1,1]$, $v_2 = [1,-4,6]$ and $v_3 = [1,-1,-3]$ (coordinates under the basis above). The system of eigenvalues corresponding to $v_1$ are $\lambda_p = p+1$, as expected. 

The system of eigenvalues corresponding to $v_2$ are: \[\lambda_2 = -2,\quad \lambda_5 = 1,\quad \lambda_7 = -2,\quad \lambda_{13} = 4,\quad \lambda_{17} = -2,\quad \lambda_{19} = 0,\quad ...\] which match the unique newform $f_1\in S_2^{\text{new}}(\Gamma_0(11))$ having LMFDB label \url{11.2.a.a} (which has Atkin-Lehner sign $-1$): \begin{align*}f_1(\tau) &= q-2q^2-q^3+2q^4+q^5+2q^6-2q^7-2q^9-2q^{10}+q^{11}\\ &-2q^{12}+4q^{13}+4q^{14}-q^{15}-4q^{16}-2q^{17}+4q^{18}+2q^{20}+...\end{align*}

The system of eigenvalues corresponding to $v_3$ are: \[\lambda_2 = 1,\quad \lambda_5 = -2,\quad \lambda_7 = 4,\quad \lambda_{13} = -2, \quad \lambda_{17} = -2,\quad \lambda_{19} = 0,\quad ...\] which match the unique newform $f_2\in S_2^{\text{new}}(\Gamma_0(33))$ having LMFDB label \url{33.2.a.a} (which has Atkin-Lehner sign $+1$ at $3$ and $-1$ at $11$): \begin{align*}f_2(\tau) &= q+q^2-q^3-q^4-2q^5-q^6+4q^7-3q^8+q^{9}-2q^{10}+q^{11}\\ &+q^{12}-2q^{13}+4q^{14}+2q^{15}-q^{16}-2q^{17}+q^{18}+2q^{20}+...\end{align*}

Note that all newforms of levels $11$ and $33$ have Atkin-Lehner eigenvalues $+1$ at $3$ and $-1$ at $11$. However, if there had been newforms with other Atkin-Lehner eigenvalues, then we would not be able to see them in this calculation. We would have to twist the weight by $\theta_d$ for some suitable $d$ (as per Theorem \ref{2p2q2}).

\end{example}

\begin{example}
Now consider the even lattice $\Lambda$ with Gram matrix: \[T = \begin{pmatrix}
2 & 1 & 0\\ 1 & 50 & 0\\ 0 & 0 & 22
\end{pmatrix}\] This also has $\mathrm{disc}(\Lambda) = 2178 = 2\cdot 3^2\cdot 11^2$ and $\Lambda$ is in Genus B with $p=11$ and $q = 3$ since: \begin{align*}\Lambda_3 &\sim \langle 1,2,9 \rangle \sim \langle 1,r,q^2\rangle\\ \Lambda_{11} &\sim \langle 10,11,11 \rangle\sim \langle r,p,p\rangle.\end{align*} It turns out that $\text{Cl}(\Lambda) = \{[\Lambda_1],[\Lambda_2],[\Lambda_3],[\Lambda_4],[\Lambda_5]\}$ is of size five. Hence: \[\mathcal{S}(O(V),O(\widehat{\Lambda}),\mathbb{C}) = \mathbb{C}\varphi_1\oplus\mathbb{C}\varphi_2\oplus\mathbb{C}\varphi_3\oplus\mathbb{C}\varphi_4\oplus\mathbb{C}\varphi_5\] where as before $\varphi_i([\Lambda_j]) = \delta_{i,j}$.

The Hecke operators $T_p$ for $p\neq 3,11$ are represented by matrices (which are also the adjacency matrices for the $p$-neighbour graphs $G_p$): \begin{align*}T_2 &= \begin{pmatrix}0 & 1 & 2 & 0 & 0\\ 1 & 0 & 0 & 2 & 0\\ 1 & 0 & 0 & 0 & 2\\ 0 & 1 & 0 & 0 & 2\\ 0 & 0 & 1 & 1 & 1\end{pmatrix},\quad T_5 = \begin{pmatrix}2 & 2 & 0 & 2 & 0\\ 2 & 2 & 2 & 0 & 0\\ 0 & 1 & 1 & 2 & 2\\ 1 & 0 & 2 & 1 & 2\\ 0 & 0 & 1 & 1 & 4\end{pmatrix},\quad T_7 = \begin{pmatrix}0 & 0 & 4 & 0 & 4\\ 0 & 0 & 0 & 4 & 4\\ 2 & 0 & 2 & 0 & 4\\ 0 & 2 & 0 & 2 & 4\\ 1 & 1 & 2 & 2 & 2\end{pmatrix},\\ \quad T_{13} &= \begin{pmatrix}2 & 0 & 0 & 4 & 8\\ 0 & 2 & 4 & 0 & 8\\ 0 & 2 & 4 & 4 & 4\\ 2 & 0 & 4 & 4 & 4\\ 2 & 2 & 2 & 2 & 6\end{pmatrix},\quad T_{17} =\begin{pmatrix}0 & 2 & 4 & 4 & 8\\ 2 & 0 & 4 & 4 & 8\\ 2 & 2 & 2 & 4 & 8\\ 2 & 2 & 4 & 2 & 8\\ 2 & 2 & 4 & 4 & 6\end{pmatrix},\quad T_{19} = \begin{pmatrix}0 & 0 & 4 & 4 & 12\\ 0 & 0 & 4 & 4 & 12\\ 2 & 2 & 4 & 4 & 8\\ 2 & 2 & 4 & 4 & 8\\ 3 & 3 & 4 & 4 & 6\end{pmatrix},\quad...\end{align*}

The simultaneous eigenvectors are (coordinates under the basis above): \begin{align*}v_1 &= [1,1,1,1,1]\\  v_2 &=[2,0,-2,-1,1]\\ v_3 &= [0,2,-1,-2,1]\\ v_4 &= [2,2,0,0,-1]\\  v_5 &=[1,-1,1,-1,0]\end{align*} The system of eigenvalues corresponding to $v_1$ is $\lambda_p = p+1$ as expected. 

The system of eigenvalues corresponding to $v_2$ and $v_3$ are equal and are: \[\lambda_2 = -2,\quad \lambda_5 = 1,\quad \lambda_7 = -2,\quad \lambda_{13} = 4,\quad \lambda_{17} = -2,\quad \lambda_{19} = 0,\quad ...\] which match the newform $f_1\in S_2^{\text{new}}(\Gamma_0(11))$ having LMFDB label \url{11.2.a.a} (and Atkin-Lehner sign $+1$ at $3$ and $-1$ at $11$): \begin{align*}f_1(\tau) &= q - 2q^2 - q^3 + 2q^4 + q^5 + 2q^6 - 2q^7 - 2q^9 - 2q^{10} + q^{11}\\ &- 2q^{12} + 4q^{13} + 4q^{14} - q^{15} - 4q^{16} - 2q^{17} + 4q^{18} + 2q^{20}+...\end{align*} 

The system of eigenvalues corresponding to $v_4$ are: \[\lambda_2 = 1,\quad \lambda_5 = -2,\quad \lambda_7 = 4,\quad \lambda_{13} = -2,\quad \lambda_{17} = -2,\quad \lambda_{19} = 0,\quad ...\] which match the newform $f_2\in S_2^{\text{new}}(\Gamma_0(33))$ having LMFDB label \url{33.2.a.a} (and Atkin-Lehner sign $+1$ at $3$ and $-1$ at $11$): \begin{align*}f_2(\tau) &= q + q^2 - q^3 - q^4 - 2q^5 - q^6 + 4q^7 - 3q^8 + q^9 - 2q^{10}+ q^{11}\\ &+ q^{12} - 2q^{13} + 4q^{14} + 2q^{15} - q^{16} - 2q^{17} + q^{18} + 2q^{20}+...\end{align*} 

The system of eigenvalues corresponding to $v_5$ are: \[\lambda_2 = 1,\quad \lambda_5 = 4,\quad \lambda_7 = -2,\quad \lambda_{13} = -2,\quad \lambda_{17} = -2,\quad \lambda_{19} = -6,\quad ...\] which match the newform $f_3\in S_2^{\text{new}}(\Gamma_0(99),+1,-1)$ having LMFDB label \url{99.2.a.c}: \begin{align*}f_3(\tau) &= q + q^2 - q^4 + 4q^5 - 2q^7 - 3q^8 + 4q^{10}+ q^{11}\\ &- 2q^{13} - 2q^{14} - q^{16} - 2q^{17} - 6q^{19} - 4q^{20}+...\end{align*}

Note that we do not see the newforms of level $99$ with LMFDB labels \url{99.2.a.a}, \url{99.2.a.b} and \url{99.2.a.d} in this computation. This is because they have the wrong Atkin-Lehner signs (but we would see them by twisting the weight by $\theta_d$ for a suitable $d$, as in Theorem \ref{2p2q2}).

Given that the image of the transfer in this case is $S_{2k+2}^{11\mathrm{-new}}(\Gamma_0(99),+1,-1)$, one might have been expecting to see the newform of level $11$ three times and the newform of level $33$ twice (since there are three distinct lifts from level $11$ to level $99$ and two distinct lifts from level $33$ to level $99$ under the degeneracy maps). However, note that these lifts will have varying Atkin-Lehner eigenvalues, accounting for the fact that we only see this newform with multiplicity two.
\end{example}

\begin{example}
Finally, consider the even lattice $\Lambda$ with Gram matrix: \[T = \begin{pmatrix}
10 & 1 & 0\\ 1 & 10 & 0\\ 0 & 0 & 22
\end{pmatrix}\] This also has $\mathrm{disc}(\Lambda) = 2178 = 2\cdot 3^2\cdot 11^2$ and $\Lambda$ is in Genus C with $p=11$ and $q=3$ since: \begin{align*}\Lambda_3 &\sim \langle 1,1,18 \rangle\sim \langle 1,1,q^2r\rangle\\ \Lambda_{11} &\sim \langle 10,11,11 \rangle\sim \langle r,p,p\rangle.\end{align*} It turns out that $\text{Cl}(\Lambda) = \{[\Lambda_1],[\Lambda_2],[\Lambda_3]\}$ is of size three. Hence: \[\mathcal{S}(O(V),O(\widehat{\Lambda}),\mathbb{C}) = \mathbb{C}\varphi_1\oplus\mathbb{C}\varphi_2\oplus\mathbb{C}\varphi_3,\] where as before $\varphi_i([\Lambda_j]) = \delta_{i,j}$.

The Hecke operators $T_p$ for $p\neq 3,11$ are represented by matrices (which are also the adjacency matrices for the $p$-neighbour graphs $G_p$): \vspace{0.2in}\begin{align*}T_2 &= \begin{pmatrix}1 & 2 & 0\\ 1 & 0 & 2\\ 0 & 2 & 1\end{pmatrix},\quad T_5 = \begin{pmatrix}4 & 2 & 0\\ 1 & 3 & 2\\ 0 & 2 & 4\end{pmatrix},\quad T_7 = \begin{pmatrix}0 & 4 & 4\\ 2 & 2 & 4\\ 2 & 4 & 2\end{pmatrix},\\ \quad T_{13} &= \begin{pmatrix}2 & 4 & 8\\  2 & 8 & 4\\ 4 & 4 & 6\end{pmatrix},\quad T_{17} = \begin{pmatrix}2 & 8 & 8\\ 4 & 6 & 8\\ 4 & 8 & 6\end{pmatrix},\quad T_{19} = \begin{pmatrix}0 & 8 & 12\\ 4 & 8 & 8\\ 6 & 8 & 6\end{pmatrix},\quad...\end{align*}
\newpage
The simultaneous eigenvectors are $v_1 = [1,1,1]$, $v_2 = [2,-3,2]$ and $v_3 = [2,0,-1]$. The system of eigenvalues corresponding to $v_1$ are $\lambda_p = p+1$, as expected. 

The system of eigenvalues corresponding to $v_2$ are \[\lambda_2 = -2,\quad \lambda_5 = 1,\quad \lambda_7 = -2,\quad \lambda_{13} = 4,\quad \lambda_{17} = -2,\quad \lambda_{19} = 0,\quad ...\] which match the unique newform $f_1\in S_2^{\text{new}}(\Gamma_0(11))$ having LMFDB label \url{11.2.a.a} (which has Atkin-Lehner sign $-1$): \begin{align*}f_1(\tau) &= q-2q^2-q^3+2q^4+q^5+2q^6-2q^7-2q^9-2q^{10}+q^{11}\\ &-2q^{12}+4q^{13}+4q^{14}-q^{15}-4q^{16}-2q^{17}+4q^{18}+2q^{20}+...\end{align*} 

The system of eigenvalues corresponding to $v_3$ are:  \[\lambda_2 = 1,\quad \lambda_5 = 4,\quad \lambda_7 = -2,\quad \lambda_{13} = -2,\quad \lambda_{17} = -2,\quad \lambda_{19} = -6,\quad ...\] which match the newform $f_3\in S_2^{\text{new}}(\Gamma_0(99),+1,-1)$ having LMFDB label \url{99.2.a.c}: \begin{align*}f_3(\tau) &= q + q^2 - q^4 + 4q^5 - 2q^7 - 3q^8 + 4q^{10}+ q^{11}\\ &- 2q^{13} - 2q^{14} - q^{16} - 2q^{17} - 6q^{19} - 4q^{20}+...\end{align*}

Note that again we do not see the newforms of level $99$ with LMFDB labels \url{99.2.a.a}, \url{99.2.a.b} and \url{99.2.a.d}, due to these having the wrong Atkin-Lehner signs (but we would see them by twisting the weight). We also do not see the newform $f_2$ of level $33$ with LMFDB label \url{33.2.a.a}, despite having the correct Atkin-Lehner eigenvalues. Also the newform $f_1$ of level $11$ only appears once. These observations demonstrate the difference between Genus B and Genus C, as explained by Theorem \ref{2p2q2}.

\end{example}

\subsection{$\mathbf{N}$ square-free}\label{N1N2N3N4}

We’re now ready to consider the general case of $\mathrm{disc}(\Lambda) = 2N^2$ with $N$ odd and square-free.

Note that there is no positive definite ternary even lattice of discriminant $2$ since as we saw earlier we require $S(V_p) = -1$ for at least one odd prime (to balance the fact that $S(V_2) = -1$). It follows that $N>1$, so that $N$ has at least one prime divisor. 

Since we already anticipate the need to sort the primes dividing $N$ according to Hasse invariants and local lattice structures, we will write $N = N_1N_2N_3N_4$ where:

\begin{itemize}
\item{$N_1$ is the product of primes $p \,|\, N$ such that $S(V_p) = -1$,}
\item{$N_2$ is the product of primes $p \,|\, N$ such that $S(V_p) = 1$ and $\Lambda_p \sim \langle a,p,pb\rangle$ for some $a,b\in \{1,r\}$,}
\item{$N_3$ is the product of primes $p \,|\, N$ such that $S(V_p) = 1$,  $\Lambda_p\sim \langle 1,a,p^2b\rangle$ for some $a,b\in \{1,r\}$ with $\left(\frac{-a}{p}\right) = 1$}
\item{$N_4$ is the product of primes $p \,|\, N$ such that $S(V_p) = 1$,  $\Lambda_p\sim \langle 1,a,p^2b\rangle$ for some $a,b\in \{1,r\}$ with $\left(\frac{-a}{p}\right) = -1$}
\end{itemize}

As usual, we classify the possible genera first.

\begin{lem}
Suppose that $N$ has $n$ prime divisors. Then $\Lambda$ belongs to one of $2^{n-1}(2^n-1)$ genera. Sorting the primes dividing $N$ according to the decomposition $N = N_1N_2N_3N_4$ above, the local lattices are:

\[\Lambda_t\sim\begin{cases}
\langle 1,1,2\rangle & \text{ if } t\nmid 2N\\ \left\langle\begin{pmatrix}0 & 1\\ 1 & 0\end{pmatrix}, -2\right\rangle & \text{ if } t = 2\\ \langle r,t,tr\rangle & \text{ if } t \,|\, N_1 \text{ and } t \equiv 1 \bmod 8\\ \langle r,t,t\rangle & \text{ if } t \,|\, N_1 \text{ and } t \equiv 3 \bmod 8\\ \langle 1,t,tr\rangle & \text{ if } t \,|\, N_1 \text{ and } t \equiv 5 \bmod 8\\ \langle 1,t,t\rangle & \text{ if } t \,|\, N_1 \text{ and } t \equiv 7 \bmod 8\\ \langle 1,t,t\rangle & \text{ if } t \,|\, N_2 \text{ and } t \equiv 1 \bmod 8\\ \langle 1,t,tr\rangle & \text{ if } t \,|\, N_2 \text{ and } t \equiv 3 \bmod 8\\ \langle r,t,t\rangle & \text{ if } t \,|\, N_2 \text{ and } t \equiv 5 \bmod 8\\ \langle r,t,tr\rangle & \text{ if } t \,|\, N_2 \text{ and } t \equiv 7\bmod 8\\ \langle 1,1,t^2\rangle & \text{ if } t \,|\, N_3 \text{ and } t\equiv 1 \bmod 8\\ \langle 1,r,t^2\rangle & \text{ if } t \,|\, N_3 \text{ and } t\equiv 3\bmod 8\\ \langle 1,1,t^2r\rangle & \text{ if } t \,|\, N_3 \text{ and } t\equiv 5 \bmod 8\\ \langle 1,r,t^2r\rangle & \text{ if } t \,|\, N_3 \text{ and } t\equiv 7 \bmod 8\\ \langle 1,r,t^2r\rangle & \text{ if } t \,|\, N_4 \text{ and } t\equiv 1 \bmod 8\\ \langle 1,1,t^2r\rangle & \text{ if } t \,|\, N_4 \text{ and } t\equiv 3\bmod 8\\ \langle 1,r,t^2\rangle & \text{ if } t \,|\, N_4 \text{ and } t\equiv 5 \bmod 8\\ \langle 1,1,t^2\rangle & \text{ if } t \,|\, N_4 \text{ and } t\equiv 7 \bmod 8
\end{cases}\]
\end{lem}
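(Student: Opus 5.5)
The proof will follow the $N=p$ and $N=pq$ cases and has three steps: fix the local lattices at $t\nmid 2N$ and $t=2$, match the four types at $t\,|\,N$ to the four classes $N_1,\dots,N_4$, and then count genera via the Hasse invariant constraint.

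First, for $t \nmid 2N$ and $t=2$, Lemma \ref{localisom} gives $\Lambda_t\sim\langle 1,1,2\rangle$ and $\Lambda_2\sim\left\langle\begin{pmatrix}0&1\\1&0\end{pmatrix},-2\right\rangle$ directly. The Hasse invariants are then $S(V_t)=1$ for $t\nmid 2N$, $S(V_\infty)=1$ (positive definite), and $S(V_2)=-1$. The value at $2$ follows, as in the earlier subsections, from the diagonalisation $\langle 1,-1,-2\rangle$, which gives $(1,-1)_2(1,-2)_2(-1,-2)_2=(-1,-2)_2=-1$.

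Second, for each $t\,|\,N$, Lemma \ref{localisom} lists exactly four isometry classes. Exactly one of them has $S(V_t)=-1$, namely the one recorded in the second part of that lemma; this gives the $N_1$ rows. Of the remaining three, one has the shape $\langle a,t,tb\rangle$ (the $N_2$ rows) and two have the shape $\langle 1,a,t^2b\rangle$. These two have $a=1$ and $a=r$ respectively. Since $\left(\frac{-1}{t}\right)=1$ exactly when $t\equiv 1\bmod 4$, the lattice with $\left(\frac{-a}{t}\right)=1$ is the one with $a=1$ when $t\equiv 1,5\bmod 8$ and the one with $a=r$ when $t\equiv 3,7\bmod 8$. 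This sorts the two into $N_3$ and $N_4$ and produces the table, up to rewriting each form as listed. For example, for $N_3$ with $t\equiv 5\bmod 8$, Lemma \ref{localisom} gives $\langle 1,1,t^2r\rangle$.

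Third, for the count: a genus is determined by the choice of local class at each $t\,|\,N$. By Cassels' Lemma 5.1 (quoted in Section 2), any choice is realised by a global positive definite lattice exactly when the space exists. This happens if and only if the product of Hasse invariants over all places is $1$, i.e.\ $\prod_{t\mid N}S(V_t)=-1$, i.e.\ $|N_1|$ (the number of primes in $N_1$) is odd. There is no exceptional case, since $n=3$.

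It remains to count. Choose a subset of odd size for $N_1$; each remaining prime then has $3$ options. The number of genera is
\[\sum_{j \text{ odd}}\binom{n}{j}3^{n-j} = \frac{(3+1)^n-(3-1)^n}{2} = \frac{4^n-2^n}{2} = 2^{n-1}(2^n-1).\]

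The main obstacle is the bookkeeping of the Hasse invariants, i.e.\ checking that the single class with $S(V_t)=-1$ in each congruence class mod $8$ matches the $N_1$ rows, and that $S(V_t)=1$ for the other three. This will be done by direct Hilbert symbol computation of $(a,t)_t(a,tb)_t(t,tb)_t$, just as in Lemma \ref{localisom}.
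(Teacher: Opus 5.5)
Your proposal is correct and follows the paper's proof: Lemma~\ref{localisom} forces the local classes at $t\nmid N$, and the product formula with $S(V_2)=-1$ forces an odd number of primes with $S(V_t)=-1$. Those primes form $N_1$, each with a unique class, while every other prime has three choices sorted into $N_2,N_3,N_4$, and one counts.

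Your write-up is a little more careful than the paper's in several places:
\begin{itemize}
\item You include the binomial coefficients in $\sum_{j\ \mathrm{odd}}\binom{n}{j}3^{n-j}$. The paper's displayed sum omits them, although its final value $2^{n-1}(2^n-1)$ is right.
\item You verify $S(V_2)=-1$ explicitly.
\item You carry out the $N_3/N_4$ sorting explicitly via $\left(\frac{-1}{t}\right)$.
\item You justify via Cassels' lemma that every admissible choice is realised, so the count is exact.
\end{itemize}
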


\begin{proof}
Once again, the first two isometries follow from Lemma \ref{localisom}.

Note that $S(V_{\infty}) = 1$ since $(V,Q)$ is positive definite. Since $S(V_t) = 1$ for all primes $t \nmid 2N$ and $S(V_2) = -1$ we must have $\prod_{p \,|\, N}S(V_p) = -1$ (by compatibility of Hasse invariants). Thus an odd number of primes must satisfy $S(V_p) = -1$ (which we can arrange as above to be the primes dividing $N_1$). Looking at Lemma \ref{localisom} we see that $\Lambda_t$ is uniquely determined up to isometry for primes $t| N_1$ and $\Lambda_t$ takes one of three possibilities up to isometry when $t|\frac{N}{N_1}$.

Letting the number of primes dividing $N_1$ be $2g+1$ we see that the number of genera is then \[S = \sum_{\substack{m\leq n\\  m \text{ odd }}}3^{n-m} = 2^{n-1}(2^n-1)\] by the Binomial Theorem.
\end{proof}

We are now ready to prove Theorem \ref{main}, concerning the explicit transfer for positive definite, ternary, even lattices of discriminant $2N^2$ with $N$ odd and square-free.

\begin{customthm}{1.1}\label{2N2}
For each $k\geq 0$ and $d \,|\, N$ there exists a Hecke-equivariant isomorphism (for Hecke operators away from $N$): \[\mathcal{S}(O(V),O(\widehat{\Lambda}),W_{2k}\otimes \theta_d) \cong \bigoplus_{M_4|N_4}S_{2k+2}^{N_1M_4^2\mathrm{-new}}(\Gamma_0(N_1N_2N_3^2M_4^2), \varepsilon_{N_1}^{(N_1N_2N_3^2M_4^2)}\varepsilon_d^{(N_1N_2N_3^2M_4^2)})\]
\end{customthm}

\begin{proof}
The proof is very similar to the previous results. One can apply each of the maps $\hat{\psi}_{(p)}$ for $p \,|\, N_1N_2$ and scale the quadratic map by $\frac{1}{N_1N_2}$ to produce an even lattice $\Pi$ of discriminant $2N_1N_2N_3^2N_4^2$ in the quadratic space $(V,Q^{(\frac{1}{N_1N_2})})$ with local lattices:

\[\Pi_t\sim\begin{cases}
\langle (N_1N_2)^{-1},(N_1N_2)^{-1},2(N_1N_2)^{-1}\rangle \sim \langle 1,1,2N_1N_2\rangle & \text{ if } t\nmid 2N\\ \left\langle\begin{pmatrix}0 & (N_1N_2)^{-1}\\ (N_1N_2)^{-1} & 0\end{pmatrix}, -2(N_1N_2)^{-1}\right\rangle\sim\left\langle\begin{pmatrix}0 & 1\\ 1 & 0\end{pmatrix}, -2N_1N_2\right\rangle & \text{ if } t = 2\\ \langle t(N_1N_2)^{-1},tr(N_1N_2)^{-1},t^2r(N_1N_2)^{-1}\rangle \sim \langle 1,r,rN_1N_2\rangle & \text{ if } t \,|\, N_1 \text{ and } t \equiv 1 \bmod 8\\ \langle t(N_1N_2)^{-1},t(N_1N_2)^{-1}, t^2r(N_1N_2)^{-1}\rangle \sim \langle 1,1,rN_1N_2\rangle & \text{ if } t \,|\, N_1 \text{ and } t \equiv 3 \bmod 8\\ \langle t(N_1N_2)^{-1},tr(N_1N_2)^{-1},t^2(N_1N_2)^{-1}\rangle \sim \langle 1,r,N_1N_2\rangle & \text{ if } t \,|\, N_1 \text{ and } t \equiv 5 \bmod 8\\ \langle t(N_1N_2)^{-1},t(N_1N_2)^{-1},t^2(N_1N_2)^{-1}\rangle \sim \langle 1,1,N_1N_2\rangle & \text{ if } t \,|\, N_1 \text{ and } t \equiv 7 \bmod 8\\ \langle t(N_1N_2)^{-1},t(N_1N_2)^{-1},t^2(N_1N_2)^{-1}\rangle \sim\langle 1,1,N_1N_2\rangle & \text{ if } t \,|\, N_2 \text{ and } t \equiv 1 \bmod 8\\ \langle t(N_1N_2)^{-1},tr(N_1N_2)^{-1}, t^2(N_1N_2)^{-1}\rangle \sim\langle 1,r,N_1N_2\rangle & \text{ if } t \,|\, N_2 \text{ and } t \equiv 3 \bmod 8\\ \langle t(N_1N_2)^{-1},t(N_1N_2)^{-1},t^2r(N_1N_2)^{-1}\rangle \sim \langle 1,1,rN_1N_2\rangle & \text{ if } t \,|\, N_2 \text{ and } t \equiv 5 \bmod 8\\ \langle t(N_1N_2)^{-1},tr(N_1N_2)^{-1}, t^2r(N_1N_2)^{-1}\rangle\sim \langle 1,r,rN_1N_2\rangle & \text{ if } t \,|\, N_2 \text{ and } t \equiv 7\bmod 8\\ \langle (N_1N_2)^{-1},(N_1N_2)^{-1},t^2(N_1N_2)^{-1}\rangle \sim \langle 1,1,t^2N_1N_2\rangle & \text{ if } t \,|\, N_3 \text{ and } t\equiv 1 \bmod 8\\ \langle (N_1N_2)^{-1},r(N_1N_2)^{-1},t^2(N_1N_2)^{-1}\rangle\sim \langle 1,r,t^2N_1N_2\rangle & \text{ if } t \,|\, N_3 \text{ and } t\equiv 3\bmod 8\\ \langle (N_1N_2)^{-1},(N_1N_2)^{-1},t^2r(N_1N_2)^{-1}\rangle\sim\langle 1,1,t^2rN_1N_2\rangle & \text{ if } t \,|\, N_3 \text{ and } t\equiv 5 \bmod 8\\ \langle (N_1N_2)^{-1},r(N_1N_2)^{-1},t^2r(N_1N_2)^{-1}\rangle\sim\langle 1,r,t^2rN_1N_2\rangle & \text{ if } t \,|\, N_3 \text{ and } t\equiv 7 \bmod 8\\ \langle (N_1N_2)^{-1},r(N_1N_2)^{-1},t^2r(N_1N_2)^{-1}\rangle\sim\langle 1,r,t^2rN_1N_2\rangle & \text{ if } t \,|\, N_4 \text{ and } t\equiv 1 \bmod 8\\ \langle (N_1N_2)^{-1},(N_1N_2)^{-1},t^2r(N_1N_2)^{-1}\rangle\sim\langle 1,1,t^2rN_1N_2\rangle & \text{ if } t \,|\, N_4 \text{ and } t\equiv 3\bmod 8\\ \langle (N_1N_2)^{-1},r(N_1N_2)^{-1},t^2(N_1N_2)^{-1}\rangle\sim\langle 1,r,t^2N_1N_2\rangle & \text{ if } t \,|\, N_4 \text{ and } t\equiv 5 \bmod 8\\ \langle (N_1N_2)^{-1},(N_1N_2)^{-1},t^2(N_1N_2)^{-1}\rangle\sim\langle 1,1,t^2N_1N_2\rangle & \text{ if } t \,|\, N_4 \text{ and } t\equiv 7 \bmod 8
\end{cases}\]

By Lemma \ref{samespace} and Lemma \ref{scaling} we get an equality of Hecke modules: \[\mathcal{S}(O(V),O(\widehat{\Lambda}), W) = \mathcal{S}(O(V),O(\widehat{\Pi}), W)\] for any representation $W$ of $O(V)$.

Lemma \ref{ram} tells us that the quadratic space $(V,Q^{(\frac{1}{N_1N_2})})$ gives rise to a quaternion algebra $B = \mathrm{Cl}^0(V)$ of discriminant $N_1$, and that the order $\mathcal{O} = \mathrm{Cl}^0(\Pi)$ has local orders $\mathcal{O}_t$ that are:

\begin{itemize}
\item{maximal of discriminant $t$ for each prime $t \,|\, N_1$ (with $\varepsilon_t(\mathcal{O}) = -1$),}
\item{Eichler of discriminant $t$ for each prime $t \,|\, N_2$ (with $\varepsilon_t(\mathcal{O}) = 1$),}
\item{Eichler of discriminant $t^2$ for each prime $t \,|\, N_3$ (with $\varepsilon_t(\mathcal{O}) = 1$),}
\item{non-Eichler of discriminant $t^2$ for each prime $t \,|\, N_4$ (with $\varepsilon_t(\mathcal{O}) = -1$),}
\end{itemize}

In particular, $\mathcal{O}$ is residually unramified everywhere, and so the result follows from the results of Hein-Tornaria-Voight (see Theorem \ref{thm: 6.12 in HTV}), noting that \[(D,N, N^+, N^-) = (N_1, N_1N_2N_3^2N_4^2, N_2N_3^2, N_1N_4^2)\] and that the valid divisors $M|N^-$ are those of the form $M = N_1M_4^2$ for some $M_4 \,|\,N_4$.
\end{proof}

Theorem \ref{2N2} completes the task that we set ourselves at the beginning of the paper, i.e.\ to give a full explicit transfer for any even lattice $\Lambda$ in a positive definite ternary quadratic space $(V,Q)$ over $\mathbb{Q}$ with $\mathrm{disc}(\Lambda) = 2N^2$ for $N$ odd and square-free. An interesting question is to consider what the analogous results say when $N$ is even and square-free. Such results will hopefully feature in a future paper of the authors.

\end{document}